\numberwithin{equation}{section}
\newcommand*\circled[1]{\tikz[baseline=(char.base)]
{\node[shape=circle,draw,inner sep=1] (char) {#1};}}
\newtheorem{thm}{Theorem}[section]
\newtheorem{lem}[thm]{Lemma}
\newtheorem{cor}[thm]{Corollary}
\newtheorem{prop}[thm]{Proposition}
\theoremstyle{definition}
\newtheorem{defin}[thm]{Definition}
\def\cb{{\mathcal B}}
\def\ce{{\mathcal E}}
\def\ch{{\mathcal H}}
\def\cp{{\mathcal P}}
\def\cs{{\mathcal S}}
\def\cu{{\mathcal U}}
\def\ga{{\mathfrak A}}
\def\gb{{\mathfrak B}}
\def\gc{{\mathfrak C}}
\def\bc{{\mathbb C}}
\def\bn{{\mathbb N}}
\def\bz{{\mathbb Z}}
\def\b{\beta}
\def\g{\gamma}  
\def\eeps{\epsilon}
\def\eps{\varepsilon}
\def\p{\pi}
\def\f{\varphi}  
\def\th{\theta} 
\def\om{\omega}
\def\id{\hbox{id}}
\def\aut{\mathop{\rm aut}}
\def\id{{\rm id}}
\def\ad{\mathop{\rm ad}}
\def\idd{{1}\!\!{\rm I}}
\DeclareMathAlphabet{\mathpzc}{OT1}{pzc}{m}{it}
\begin{document}

\title[On $C^*$-norms on $\bz_2$-graded tensor products]
{On $C^*$-norms on $\bz_2$-graded tensor products}
\author{Vitonofrio Crismale}
\address{Vitonofrio Crismale\\
Dipartimento di Matematica\\
Universit\`{a} degli studi di Bari\\
Via E. Orabona, 4, 70125 Bari, Italy}
\email{\texttt{vitonofrio.crismale@uniba.it}}
\author{Stefano Rossi}
\address{Stefano Rossi\\
Dipartimento di Matematica\\
Universit\`{a} degli studi di Bari\\
Via della E. Orabona, 4, 70125 Bari, Italy} \email{{\tt
stefano.rossi@uniba.it}}
\author{Paola Zurlo}
\address{Paola Zurlo\\
Dipartimento di Matematica\\
Universit\`{a} degli studi di Bari\\
Via E. Orabona, 4, 70125 Bari, Italy}
\email{\texttt{paola.zurlo@uniba.it}}
%\date{\today}

\begin{abstract}

We systematically investigate $C^*$-norms on the algebraic graded product of $\bz_2$-graded $C^*$-algebras.
This requires to single out the notion of a compatible norm, that is a norm with respect to which the product grading is bounded.
We then focus  on the spatial norm proving that it is minimal among all compatible $C^*$-norms.
To this end, we  first show that commutative $\bz_2$-graded $C^*$-algebras enjoy a nuclearity property in the category
of graded $C^*$-algebras. In addition, we provide a characterization of the extreme even states of a given
graded $C^*$-algebra in terms of their restriction to its even part.

\vskip0.1cm\noindent \\
{\bf Mathematics Subject Classification}: 46L05, 46L06, 46L30, 17A70.\\
{\bf Key words}: $\bz_2$-graded $C^*$-algebras, $\bz_2$-graded tensor products, product states, $C^*$-cross norms, nuclear algebras.
\end{abstract}

\maketitle
%\tableofcontents
\section{Introduction}\label{sec1}

Given two $C^*$-algebras $\ga$ and $\gb$, their algebraic tensor product $\ga\odot\gb$ can easily be endowed with
a natural structure of $^*$-algebra. However, it is a well-known fact that in general $\ga\odot\gb$
can be completed to a $C^*$-algebra in several different ways. Phrased differently,
more than one $C^*$-norm can be introduced on $\ga\odot\gb$. As is known, among the possible norms, the so-called maximal and minimal norms play a privileged role
in that any other $C^*$-norm is bounded between this two, see \emph{e.g.} \cite{T1}.
However, when at least one of the two given $C^*$-algebras is nuclear, there is by definition precisely one way to complete
 $\ga\odot\gb$, namely all $C^*$-norms on it agree with each other.
For instance, commutative $C^*$-algebras and approximately finite-dimensional $C^*$-algebras are all examples of nuclear algebras, see \cite{T1}.
Tensor products of $C^*$-algebras is an accomplished theory where virtually anything is known,
although  further pieces of information have been added  until very recent times, see {\it e.g.}
\cite{OP, Pis, Pisbook}. 

When one starts adding more structure, such as a grading, possibly novel aspects  may and will occur.
A case in point is given by $\bz_2$-graded $C^*$-algebras, which in the physicists' parlance are often
referred to as superalgebras, {\it cf.} \cite{L}.
In particular, graded tensor products of $\bz_2$-graded $C^*$-algebras have recently been given a good
deal of attention in \cite{CDF}, where a definition of  quantum detailed balance for product systems endowed with 
a $\bz_2$-grading has been proposed.
Among other things,  the aforementioned paper provides
an in-depth analysis of the maximal norm on the graded tensor products of $\bz_2$-graded $C^*$-algebras,
as well as showing that a product state on the algebraic Fermi product of
two $\bz_2$-graded $C^*$-algebras is well defined whenever just one of the two states is even, namely it is invariant
with respect to the grading. This last result is actually a generalization of the
analogous result obtained in \cite{AM1} for the pivotal example of
the CAR algebra, see also \cite{BR2} and \cite{Fid2}.

In this paper, instead, much of the attention is lavished on the so-called spatial norm. In particular, we prove that it is
the smallest of all compatible norms.  To make this statement more precise,
we need to single out the notion of  a compatible norm on the algebraic Fermi product. This is by definition a norm
 such that the grading can be extended to  its completion. As a matter of fact, 
the problem of deciding wheter any $C^*$-norm
on a $\bz_2$-graded product has revealed delicate to handle, not least because
producing  counterexamples turns out to be as delicate.  Indeed, 
norms other than the maximal and the minimal one  are typically obtained by means of abstract methods, and therefore are difficult
to explicitly compute with. 

Our analysis of compatible norms also requires to make intensive use of  
extreme even states, namely the extreme points of the
(weakly*) compact convex set of all even states. Extreme even states for $\bz_2$-graded
$C^*$-algebras parallel pure states for general $C^*$-algebras. Indeed, they are sufficiently many to
separate the elements of the $C^*$-algebras. Furthermore, they return the set of all  pure states
when the grading is trivial.

The paper is organized as follows.
In Section \ref{staralg} we set the notation and recall the
basic notions on graded $C^*$-algebras and their graded tensor products, which we also refer to as
Fermi tensor products, as is done in \cite{CDF}.  We then focus on even states
on a given graded $C^*$-algebra.  In particular, we also provide a description of the
extreme even states of a commutative graded $C^*$-algebra. We end the section
by showing that a product state is even if and only if both of its marginal states are.\\
In Section \ref{hilbert} we consider $\bz_2$-graded Hilbert spaces and
their Fermi tensor product as a spatial counterpart of the construction for abstract
$C^*$-algebras. We introduce a notion of Fermi product of grading-equivariant
representations and show that the GNS representation of the product state of two
even states is nothing but the Fermi product of the  two GNS representations.\\
Finally, in Section \ref{spnorm}, after recalling the definition of the spatial
norm, we first show that it is a cross norm, as is the maximal norm.
We then move on to prove that the maximal and minimal norm are compatible with the grading.
In Proposition \ref{nuc} we prove that the Fermi product of two $C^*$-algebras can be endowed with only one $C^*$-norm
when one of the two is commutative.
In Theorem \ref{minimal} we prove that the spatial norm is actually minimal among all compatible norms.

As an outlook for the foreseeable future, we believe the present study might be a first step
towards addressing distributional symmetries on probability spaces based on graded $C^*$-algebras, as has already
been done for the CAR algebra in \cite{CFCMP, CrFid, CrFid2}, and for any $C^*$-algebra (with trivial grading) in
\cite{St}, where it is the spatial norm to play a key role.

\section{on tensor products of $*$-algebras}\label{staralg}
In this section, we collect some results on $\bz_2$-graded algebraic structures obtained as tensor product of graded $*$-algebras.
We first observe that, if not otherwise specified, throughout the paper all the structures we deal with will be taken unital.\\
If $\gb\subset\ga$ is an inclusion of unital $C^*$-algebras with a common unity,
the unital linear mapping $E:\ga\to \gb$ is called a projection if $E(b)=b$ for all $b\in\gb$,
and is said to be a $\gb$-bimodule map if $E(ab)=E(a)b$, and $E(ba)=bE(a)$ for all $a\in\ga, b\in\gb$.
A positive $\gb$-bimodule projection $E$ is called a \emph{conditional expectation}.\\
Consider the $C^*$-algebras $\ga_1$ and $\ga_2$, and denote by $\ga_1\otimes \ga_2$ the algebraic tensor product $\ga_1\odot \ga_2$ with the product and involution given by
$$
(a_1\otimes a_2)(a_1'\otimes a_2'):=a_1a_1'\otimes a_2a_2'\,,\quad (a_1\otimes a_2)^*:=a_1^*\otimes a_2^*\,,
$$
for all $a_1,a_1'\in\ga_1$, $a_2,a_2'\in\ga_2$. Let us denote by $\ga_1\otimes_{\max} \ga_2$ and $\ga_1\otimes_{\min} \ga_2$ the completion of $\ga_1\otimes \ga_2$ with respect to the maximal and minimal $C^*$-cross norm, respectively \cite{T1}.\\
Let $\cs(\ga)$ be the weak*-compact collecting the states on a $C^*$-algebra $\ga$. If one takes $\om_1\in\cs(\ga_1)$ and $\om_2\in\cs(\ga_2)$, their product state $\psi_{\om_1,\om_2}\in\cs(\ga_1\otimes_{\min} \ga_2)$ is well defined also on $\ga_1\otimes_{\max} \ga_2$, and consequently the notation $\psi_{\om_1,\om_2}\in\cs(\ga_1\otimes \ga_2)$ will be used in the sequel.\\

Consider now $\bz_2=\{1,-1\}$ with the product as the group operation, and a $*$-algebra $\ga$. The latter is called an {\it involutive $\bz_2$-graded algebra} if it decomposes as
$$
\ga=\ga_1\oplus\ga_{-1}
$$
and
$$
(\ga_i)^*=(\ga^*)_i\,,\,\,
\ga_i\ga_j\subset\ga_{ij}\,,\quad i,j=1,-1\,.
$$
The subspaces $\ga_i$, $i=1,2$ are called the homogeneous components of $\ga$, and correspondingly any element of $\ga_i$ is called a homogeneous element of $\ga$. For any homogeneous element $x\in\ga_{\pm 1}$ we denote its {\it grade} by
$$
\partial(x)=\pm1.
$$
Assigning a $\bz_2$-grading on $\ga$ is equivalent to equipping $\ga$ with an involutive $*$-automorphism $\th$ (\emph{i.e.} $\th^2=\id_\ga$). Indeed, from one hand for a given $\bz_2$-graded $*$-algebra $\ga$ one takes
\begin{equation*}
\th\lceil_{\ga_1}=\id_{\ga_1}\,,\quad \th\lceil_{\ga_{-1}}=-\id_{\ga_{-1}}\,.
\end{equation*}
On the other hand, if $\th\in\aut(\ga)$ is such that $\th^2=\id_\ga$, after taking
$$
\eps_1:=\frac{1}{2}(\id_{\ga}+\th)\,, \quad \eps_{-1}:=\frac{1}{2}(\id_{\ga}-\th)\,,
$$
and denoting
$$
\ga_1:=\eps_1(\ga)\,,\quad\ga_{-1}:=\eps_{-1}(\ga)\,,
$$
one gives $\ga_1\cap \ga_{-1}=\{0\}$. As a consequence, their direct sum $\ga=\ga_1\oplus \ga_2$ is a $\bz_2$-graded $*$-algebra.\\ It turns out that a $\bz_2$-graded $*$-algebra is a pair $(\ga,\th)$, where $\ga$ is an involutive $*$-algebra, and $\th$ an involutive $*$-automorphism on $\ga$.\\
Following \cite{CDF}, we say that $\theta$ is a $\bz_2$-grading of $\ga$. Moreover, we denote the $*$-subalgebra $\ga_+:=\ga_1$ the {\it even part}, and the subspace $\ga_-:=\ga_{-1}$ the {\it odd part} of $\ga$, respectively. Note that $\eps_1$ is a conditional expectation.
Thus, for any $a\in\ga$, we can write $a=a_++a_-$, with $a_+\in\ga_+$, $a_-\in\ga_-$,
and this decomposition is unique. In addition, one gets $\th(a_+)=a_+$, $\th(a_-)=-a_-$.\\
Taking $\th=\id_{\ga}$, one sees that any $*$-algebra $\ga$ is equipped with a $\bz_2$ trivial grading. Here, $\ga_+=\ga$ and $\ga_-=\{0\}$.\\
A simple example of $\bz_2$-graded $*$-algebra is obtained by taking an Hilbert space $\ch$, and a bounded self-adjoint unitary $U$ on $\ch$\footnote{As we will see in the sequel, the pair $(\ch, U)$ realizes a $\bz_2$-grading on $\ch$.}. The adjoint action $\ad_{U}(\cdot):=U \cdot U^*$ is an involutive $*$-automorphism which induces a $\bz_2$-grading on $\cb(\ch)$. 
Another example of paramount importance for its applications to Physics is of course the CAR algebra, see {\it e.g.}
\cite{BR2, CDF}.

Let $\big(\ga_i,\th_i\big)$, $i=1,2$, be a pair of $\bz_2$-graded $*$-algebras.
The map $T:\ga_1\to\ga_2$ is said to be {\it even} if it is grading-equivariant:
$$
T\circ\th_1=\th_2\circ T\,.
$$
When $\th_2=\id_{\ga_2}$, the map $T:\ga_1\to\ga_2$ is even if and only if it is grading-invariant, that is
$T\circ\th_1=T$. If $T$ is $\bz_2$-linear,
then it is even if and only if $T\lceil_{\ga_{1,-}}=0$. When $\big(\ga_2,\th_2\big)=\big(\bc,\id_\bc\big)$, a functional $f:\ga_1\to\bc$ is even if and only if $f\circ\th=f$.\\

In the sequel, we will denote by $\cs_+(\ga)$ the convex subset of even states. Even states play a role in giving a $\bz_2$-grading on their GNS structures.\\
More in detail, suppose that $(\ga,\th)$ is a $\bz_2$-graded $C^*$-algebra, and $\f\in\cs_+(\ga)$. Let $(\ch_\f,\pi_\f, \xi_\f, V_{\th,\f})$ be the GNS covariant representation of $\f$, where the unitary self-adjoint $V_{\th,\f}$ fixes $\xi_\f$ and verifies
$$
\pi_\f(\th(a))=V_{\th,\f}\pi_\f(a)V_{\th,\f}\,, \quad a\in\ga\,.
$$
Then, $(\cb(\ch),\ad_{V_{\th,\f}})$ is a $\bz_2$-graded $C^*$-algebra.\\

It is well known that in the case of a $C^*$-algebra with trivial grading, the pure states separate the points and are exactly the extreme elements of the convex of the states. For similar results in presence of a nontrivial $\bz_2$-grading, the notion of grading-invariant functionals is central. To this aim, take a $\bz_2$-graded $C^*$-algebra $(\ga, \theta)$, and denote by
$\ce(\cs_+(\ga))$ the set of the extreme points of the weakly* compact convex set
$\cs_+(\ga)$ of all even states of $\ga$.

\begin{prop}
\label{extremeven}
For any given $\bz_2$-graded $C^*$-algebra $(\ga, \theta)$, the states of $\ce(\cs_+(\ga))$ separate $\ga$.
\end{prop}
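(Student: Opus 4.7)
The plan is to exploit the fact that even states must annihilate odd elements, so separation in the literal pointwise sense is impossible; what is really meant is that for each nonzero $a \in \ga$ there exists $\om \in \ce(\cs_+(\ga))$ with $\om(a^*a)>0$ (equivalently, the direct sum of the GNS representations of extreme even states is faithful). This is the correct analogue of the pure-state separation property in the graded setting, since for $a_-\in\ga_-$ and $\om\in\cs_+(\ga)$ one has $\om(a_-)=\om(\th(a_-))=-\om(a_-)$.

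The first step is to produce, for a given $a\neq 0$, a nonzero positive \emph{even} element detecting $a$. A natural candidate is $b:=\eps_1(a^*a)=\tfrac12(a^*a+\th(a^*a))$. Because $\th$ is a $*$-automorphism, $\th(a^*a)\geq 0$, so $b$ is positive as an average of two positive elements. Expanding $a=a_++a_-$ one finds $b=a_+^*a_++a_-^*a_-$, which can vanish only if $a_+=a_-=0$, i.e.\ $a=0$. Hence $\|b\|>0$. Applying Gelfand--Naimark to the $C^*$-algebra $\ga_+$, pick $\psi\in\cs(\ga_+)$ with $\psi(b)=\|b\|$, and set $\tilde\om:=\psi\circ\eps_1$. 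Since $\eps_1$ is a unital positive projection, $\tilde\om$ is a state, and because $\eps_1\circ\th=\eps_1$ it is even; moreover $\tilde\om(a^*a)=\psi(b)>0$.

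The final step passes from the even state $\tilde\om$ to an extreme one. The set $\cs_+(\ga)$ is nonempty (symmetrize any state by $\th$) and weakly* compact and convex, being the intersection of $\cs(\ga)$ with the weakly* closed hyperplane $\{\om:\om\circ\th=\om\}$. By Krein--Milman, $\ce(\cs_+(\ga))\neq\emptyset$. The map $\om\mapsto\om(a^*a)$ is weakly* continuous and affine, so by Bauer's maximum principle it attains its supremum on $\cs_+(\ga)$ at some $\om_0\in\ce(\cs_+(\ga))$; consequently $\om_0(a^*a)\geq\tilde\om(a^*a)>0$, which gives the asserted separation.

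The only genuinely delicate point is verifying that the correct notion of separation is available: one cannot use $\om(a)\neq 0$ directly when $a$ is odd, and the workaround of passing through $a^*a$ (equivalently, through the faithfulness of $\bigoplus_{\om\in\ce(\cs_+(\ga))}\pi_\om$) relies on positivity of $\eps_1(a^*a)$, which in turn rests on $\th$ being a $*$-automorphism rather than merely a $*$-antiautomorphism. Once this positivity is secured, the rest of the argument is the standard Krein--Milman / Bauer routine.
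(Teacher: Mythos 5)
Your proof is correct, and it is built on the same two pillars as the paper's: the faithfulness of the conditional expectation $\eps_1$ (so that even states of the form $\psi\circ\eps_1$ detect nonzero elements) and extreme-point theory on the weak*-compact convex set $\cs_+(\ga)$. Where you diverge is in the second step: the paper argues contrapositively via Krein--Milman, taking a positive $a$ annihilated by all of $\ce(\cs_+(\ga))$, passing to convex combinations and then to weak* limits to conclude that \emph{every} even state kills $a$, whence $a=0$; you instead argue directly, producing the explicit even state $\tilde\om=\psi\circ\eps_1$ with $\tilde\om(a^*a)=\|\eps_1(a^*a)\|>0$ and then invoking Bauer's maximum principle to move the witness to an extreme point of $\cs_+(\ga)$. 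Your route is slightly more quantitative (it hands you an extreme even state with $\om_0(a^*a)\geq\|a_+^*a_+ + a_-^*a_-\|$ rather than a mere nonvanishing), at the cost of citing Bauer's principle where the paper needs only Krein--Milman. A genuine merit of your write-up is that you make explicit what ``separate'' must mean here: since every even state vanishes on $\ga_-$, literal separation by values $\om(a)$ is impossible for nontrivial gradings, and the correct reading --- $\om(a^*a)>0$ for some $\om\in\ce(\cs_+(\ga))$, equivalently faithfulness of $\bigoplus_{\om\in\ce(\cs_+(\ga))}\pi_\om$ --- is exactly what the paper's proof (which quietly restricts to positive $a$) establishes and what Corollary \ref{evnorm} subsequently uses; your identity $\eps_1(a^*a)=a_+^*a_+ + a_-^*a_-$ is a clean way to see that this notion does the required work.
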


\begin{proof}
We first observe that $\cs_+(\ga)$ separate $\ga$, since the conditional expectation $\eps_1$ from $\ga$ to $\ga_+$ is faithful. The statement is then an application of the
Krein-Milman theorem.\\
Indeed, let $a$ be a positive element in $\ga$ such that $\om(a)=0$ for any
$\om\in\ce(\cs_+(\ga))$. Then, for any convex combination $\f$ of states in
$\ce(\cs_+(\ga))$, one has $\varphi(a)=0$. Now, given any even state $\eta$, there exists a net
$\{\eta_i\}_{i\in I}$, $I$ being a directed set, for which $\eta_i$ is a convex combination of states in
$\ce(\cs_+(\ga))$, and $\eta_i\rightarrow_i \eta$ in the weak* topology.
But then we have
$$
\eta(a)=\lim_i \eta_i(a)=0\,.
$$
As the equality
holds for all even states, it follows that $a=0$.
\end{proof}

\begin{cor}
\label{evnorm}
Given a $\bz_2$-graded $C^*$-algebra $(\ga, \theta)$, for every $a\in\ga$ one has
$$
\|a\|=\sup\{\|\pi_\om(a)\| :\om \in\ce(\cs_+(\ga))\}\,.
$$
\end{cor}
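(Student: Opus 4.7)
The plan is to derive the corollary by forming the direct sum of the GNS representations $\pi_\om$ for $\om$ ranging over $\ce(\cs_+(\ga))$, and arguing that this composite representation is faithful, hence isometric. The inequality
$$
\sup\{\|\pi_\om(a)\| : \om \in \ce(\cs_+(\ga))\} \leq \|a\|
$$
is immediate, since every $*$-homomorphism between $C^*$-algebras is contractive, so I will focus exclusively on the reverse inequality.

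For the reverse inequality I would set $\pi := \bigoplus_{\om \in \ce(\cs_+(\ga))} \pi_\om$ on the Hilbert space $\bigoplus_\om \ch_\om$. Because each $\pi_\om$ is contractive, the supremum $\sup_\om \|\pi_\om(a)\|$ is finite, so $\pi$ is a well-defined $*$-representation of $\ga$ with $\|\pi(a)\| = \sup_\om \|\pi_\om(a)\|$. A faithful $*$-representation of a $C^*$-algebra is automatically isometric, so the corollary will follow at once if I verify that $\pi$ is injective.

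To check injectivity, assume $\pi(a) = 0$. Then $\pi_\om(a) = 0$ for every extreme even state $\om$, which yields $\pi_\om(a^*a) = \pi_\om(a)^*\pi_\om(a) = 0$ and in particular
$$
\om(a^*a) = \langle \pi_\om(a^*a)\xi_\om, \xi_\om \rangle = 0.
$$
Since $a^*a$ is positive and is annihilated by every extreme even state, the separation property proved in Proposition \ref{extremeven} (whose Krein--Milman argument is tailored precisely to positive elements) forces $a^*a = 0$, hence $a = 0$. I do not foresee any substantive obstacle: the content of the corollary is exactly the faithfulness of the universal representation built from $\ce(\cs_+(\ga))$, and Proposition \ref{extremeven} is designed to supply it. The only mild subtlety is to apply the separation statement to the positive element $a^*a$ rather than to $a$ itself, which is where the GNS identity $\om(a^*a) = \|\pi_\om(a)\xi_\om\|^2$ becomes the crucial bridge.
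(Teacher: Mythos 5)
Your proposal is correct and coincides with the paper's own proof: the paper likewise forms the direct sum $\oplus_{\om\in\ce(\cs_+(\ga))}\pi_\om$, invokes Proposition \ref{extremeven} for faithfulness, and concludes isometry. You have merely written out in full the routine details (passing to $a^*a$ and using the GNS identity) that the paper leaves implicit.
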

\begin{proof}
Thanks to Proposition \ref{extremeven}, the representation $\oplus_{\om\in\ce(\cs_+(\ga))} \pi_\om$ is faithful and thus isometric. This ends the proof.
\end{proof}

\begin{lem}\label{extension}
Let $(\ga, \theta)$ be a $\bz_2$-graded
$C^*$-algebra. Any state $\om$ on the even subalgebra
$\ga_+$ admits exactly one even extension to $\ga$.
\end{lem}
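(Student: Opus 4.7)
The plan is to construct the even extension explicitly using the canonical even conditional expectation $\eps_1 = \frac{1}{2}(\id_\ga + \theta) : \ga \to \ga_+$, and then to derive uniqueness from the fact that any even functional must annihilate the odd part.

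For existence, I would set $\tilde{\om} := \om \circ \eps_1$. Three things need to be checked, all routine. First, $\tilde{\om}$ is a state: it is unital since $\eps_1(1)=1$ and $\om(1)=1$, and it is positive because $\eps_1$ is positive (for $a\ge 0$ one has $\theta(a)\ge 0$ since $\theta$ is a $*$-automorphism, hence $\eps_1(a)=\tfrac12(a+\theta(a))\ge 0$); composition with the state $\om$ then yields a positive linear functional. Second, $\tilde{\om}$ extends $\om$: for $a\in\ga_+$ one has $\eps_1(a)=a$ since $\eps_1$ is a projection onto $\ga_+$. Third, $\tilde{\om}$ is even: using $\theta^2=\id_\ga$ gives $\eps_1\circ\theta=\eps_1$, so $\tilde{\om}\circ\theta = \om\circ\eps_1\circ\theta = \om\circ\eps_1 = \tilde{\om}$.

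For uniqueness, suppose $\psi$ is any even state on $\ga$ with $\psi\lceil_{\ga_+}=\om$. For each $a\in\ga$, write $a=a_++a_-$ with $a_\pm\in\ga_\pm$. By evenness of $\psi$ and the fact that $\theta(a_-)=-a_-$,
\begin{equation*}
\psi(a_-) \;=\; \psi(\theta(a_-)) \;=\; -\psi(a_-),
\end{equation*}
so $\psi(a_-)=0$. Consequently $\psi(a)=\psi(a_+)=\om(a_+)=\om(\eps_1(a))=\tilde{\om}(a)$, whence $\psi=\tilde{\om}$.

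I do not anticipate any genuine obstacle: the argument hinges only on the identity $\eps_1\circ\theta=\eps_1$ and on automorphisms preserving positivity, both of which are immediate. The only mild subtlety is to note explicitly that $\eps_1$ is positive (and hence $\om\circ\eps_1$ makes sense as a state), which is justified in one line as above.
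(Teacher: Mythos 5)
Your proof is correct and follows essentially the same route as the paper: existence via the composition $\om\circ\eps_1$ with the canonical conditional expectation, and uniqueness from the fact that even states vanish on $\ga_-$. You merely spell out the details (positivity of $\eps_1$, the identity $\eps_1\circ\theta=\eps_1$, and the computation $\psi(a_-)=-\psi(a_-)$) that the paper leaves implicit.
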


\begin{proof}
Clearly, $\om\circ \eps_1$ extends $\om$ and is even by construction.
The uniqueness of such an extension follows from the fact that even states vanish
on $\ga_-$. 
\end{proof}

\begin{prop}
\label{affine}
Let $(\ga, \theta)$ be a $\bz_2$-graded
$C^*$-algebra. An even state $\om$ belongs to $\ce(\cs_+(\ga))$ if and only if
the restriction $\om\lceil_{\ga_+}$ is a pure state.
\end{prop}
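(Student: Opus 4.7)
The plan is to establish an affine bijection between the convex sets $\cs_+(\ga)$ and $\cs(\ga_+)$ and then invoke the fact that such bijections preserve extreme points. Lemma \ref{extension} does almost all of the work: the restriction map $r:\cs_+(\ga)\to\cs(\ga_+)$, $\om\mapsto\om\lceil_{\ga_+}$, is a bijection whose inverse is the extension map $\psi\mapsto\psi\circ\eps_1$. I would observe first that $r$ is manifestly affine, and so is the inverse $\psi\mapsto\psi\circ\eps_1$ because $\eps_1$ is linear.

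Given this, the two directions are routine. For the ``if'' direction I would assume $\om\lceil_{\ga_+}$ is pure and write $\om=t\om_1+(1-t)\om_2$ with $\om_1,\om_2\in\cs_+(\ga)$ and $0<t<1$; restricting to $\ga_+$ gives $\om\lceil_{\ga_+}=t\om_1\lceil_{\ga_+}+(1-t)\om_2\lceil_{\ga_+}$, and purity forces $\om_i\lceil_{\ga_+}=\om\lceil_{\ga_+}$ for $i=1,2$; uniqueness of the even extension (Lemma \ref{extension}) then yields $\om_1=\om_2=\om$.

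For the ``only if'' direction I would suppose $\om\in\ce(\cs_+(\ga))$ and decompose $\om\lceil_{\ga_+}=t\psi_1+(1-t)\psi_2$ in $\cs(\ga_+)$. Setting $\om_i:=\psi_i\circ\eps_1$ gives two even states on $\ga$ with $t\om_1+(1-t)\om_2$ even and agreeing with $\om$ on $\ga_+$; by the uniqueness clause of Lemma \ref{extension}, this convex combination equals $\om$. Extremality then forces $\om_1=\om_2=\om$, hence $\psi_1=\psi_2=\om\lceil_{\ga_+}$.

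I do not expect any serious obstacle here, since Lemma \ref{extension} already supplies the nontrivial ingredient (namely, that an even state is determined by its values on the even part, and every state on $\ga_+$ extends to an even state on $\ga$). The only mildly subtle point worth spelling out is that one must use uniqueness of the even extension on both sides of the argument: in the ``if'' direction to conclude $\om_1=\om_2$ from equality on $\ga_+$, and in the ``only if'' direction to identify $t\om_1+(1-t)\om_2$ with $\om$ via their common restriction to $\ga_+$.
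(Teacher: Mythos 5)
Your proposal is correct and takes essentially the same approach as the paper: both use Lemma \ref{extension} to turn the restriction map $\om\mapsto\om\lceil_{\ga_+}$ into an affine bijection between $\cs_+(\ga)$ and $\cs(\ga_+)$ and then transfer extreme points across it. The only difference is one of detail --- the paper simply invokes the fact that an affine bijection carries extreme points to extreme points, whereas you verify both directions by hand via convex decompositions and the uniqueness of the even extension.
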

\begin{proof}
In light of Lemma \ref{extension} the map $S: \cs_+(\ga)\rightarrow \cs(\ga_+)$, given by
$S(\om)=\om\lceil_{\ga_+}$ for every $\om\in\cs_+(\ga)$, establishes an affine
bijection between the compact convex sets $\cs_+(\ga)$ and
$\cs(\ga_+)$. Therefore, it also establishes a bijection between the extreme points of the former set
and the extreme points of the latter.
\end{proof}
For commutative $C^*$-algebras, one can completely determine the extreme even states.
\begin{prop}
\label{exevenab}
For a (locally) compact Hausdorff  space $X$, the extreme even states of the commutative $\bz_2$-graded $C^*$-algebra $(C(X),\th)$
are given by the set $\big\{\frac{1}{2}(\delta_x+\delta_{\th(x)})\mid x\in X\big\}$.
\end{prop}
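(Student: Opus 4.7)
The plan is to combine Proposition~\ref{affine} with Gelfand duality applied to the even subalgebra. A $\bz_2$-grading $\th$ on $C(X)$ is implemented (by abuse of notation, again denoted by $\th$) by an involutive homeomorphism $\th:X\to X$, via $\th(f)(x)=f(\th(x))$. Accordingly, the even part $C(X)_+$ is the unital $C^*$-subalgebra of $\th$-invariant functions, and it is naturally identified with $C(X/\!\sim)$, where $x\sim y$ iff $y\in\{x,\th(x)\}$. The quotient $X/\!\sim$ is compact (or locally compact) Hausdorff because $\{\id,\th\}$ is a finite group of homeomorphisms acting on $X$.

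First, I would write down this identification explicitly through the map $C(X/\!\sim)\ni \bar f\mapsto f\in C(X)_+$ obtained by composing with the quotient projection; by Gelfand duality this is an isometric $*$-isomorphism onto $C(X)_+$. Under it, the pure states of $C(X)_+$ are exactly the point-evaluations at equivalence classes $[x]=\{x,\th(x)\}$, i.e.\ the functionals $f\mapsto f(x)$ for $f\in C(X)_+$, with the evident identification $[x]=[\th(x)]$.

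Next, I would invoke Proposition~\ref{affine}: $\om\in\ce(\cs_+(C(X)))$ if and only if $\om\lceil_{C(X)_+}$ is pure. Coupling this with the previous step, each extreme even state is uniquely determined by an equivalence class $[x]$, and by Lemma~\ref{extension} is the unique even extension to $C(X)$ of the point-evaluation at $[x]$. The remaining step is to verify that this unique even extension is precisely $\m_x:=\frac12(\delta_x+\delta_{\th(x)})$: it is manifestly a state, it is even since $\m_x\circ\th=\frac12(\delta_{\th(x)}+\delta_x)=\m_x$, and for $f\in C(X)_+$ one has $\m_x(f)=\frac12(f(x)+f(\th(x)))=f(x)$, i.e.\ $\m_x$ restricts to the point-evaluation at $[x]$.

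There is no real obstacle here; the only mildly delicate point is being careful with the case $\th(x)=x$ (where $\m_x=\delta_x$) and checking that the map $x\mapsto \m_x$ correctly parametrises the extreme even states by $X$ (two distinct points $x,y$ giving the same state iff $y=\th(x)$), which follows from the injectivity of $[x]\mapsto\m_x$ already encoded in the bijection of Proposition~\ref{affine}.
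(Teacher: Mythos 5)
Your proof is correct and follows essentially the same route as the paper: identify $C(X)_+$ with $C(X/\!\sim)$, apply Proposition~\ref{affine} (with Lemma~\ref{extension}) to reduce extreme even states to pure states of the even part, and check that $\frac12(\delta_x+\delta_{\th(x)})$ is the unique even extension of evaluation at $[x]$. The only cosmetic difference is that you define $\sim$ via orbits of the involutive homeomorphism while the paper defines it via separation by $\th$-invariant functions; for a (locally) compact Hausdorff $X$ these relations coincide, so the two arguments are the same.
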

\begin{proof}
Consider the equivalence relation $\sim$ on $X$ for which
$x\sim y$ if and only if $f(x)=f(y)$ for any $f\in C(X)$ such that $f= f\circ\th$. If $X\!/\sim$ is the quotient space, the even $C^*$-subalgebra $C(X)_+:=\{f\in C(X)\mid f= f\circ\theta\}$
clearly identifies with $C(X_+)$. Indeed,
if $\pi: X\rightarrow X_+$ is the natural
projection, and  $\iota: C(X_+)\rightarrow C(X)$
is the isometric $*$-homomorphism given by
$\iota(f)=f\circ\pi$ for any $f\in C(X_+)$, one has that $C(X)_{+}= \iota(C(X_+))$.\\
Thus, for any given $x\in X$, the restriction of the even state
$\frac{1}{2}(\delta_x+\delta_{\th(x)})$ to $C(X)_+$ is
pure, as it is nothing but $\delta_{\pi(x)}$.\\
Conversely, if $\om\in \ce(\cs_+(C(X))$, then there exists
$x\in X$ such that $\om\lceil_{C(X_+)}=\delta_{\pi(x)}$, which means
$\om=\frac{1}{2}(\delta_x+\delta_{\th(x)})$.
\end{proof}

Suppose that $(\ga_1,\th_1)$ and $(\ga_2,\th_2)$ are $\bz_2$-graded $*$-algebras, and consider the linear space $\ga_1\odot \ga_2$. In what follows, we recall the definition of the involutive $\bz_2$-graded tensor product, which will be henceforth denoted by
$\ga_1\, \circled{\text{{\tiny F}}}\, \ga_2$, as in \cite{CDF}. For homogeneous elements $a_1\in\ga_1$, $a_2\in\ga_2$ and $i,j\in\bz_2$, we set
\begin{eqnarray*}
\begin{split}
\label{ecfps}
&\eps(a_1,a_2):=\left\{\!\!\!\begin{array}{ll}
                      -1 &\text{if}\,\, \partial(a_1)=\partial(a_2)=-1\,,\\
                     \,\,\,\,\,1 &\text{otherwise}\,.
                    \end{array}
                    \right.\\
&\eeps(i,j):=\left\{\!\!\!\begin{array}{ll}
                      -1 &\text{if}\,\, i=j=-1\,,\\
                     \,\,\,\,\,1 &\text{otherwise}\,.
                    \end{array}
                    \right.
\end{split}
\end{eqnarray*}
Given $x,y\in\ga_1\odot\ga_2$ with 
\begin{align*}
\begin{split}
\label{xy}
&x:=\oplus_{i,j\in\bz_2}x_{i,j}\in\oplus_{i,j\in\bz_2}(\ga_{1,i}\odot\ga_{2,j})\,,\\
&y:=\oplus_{i,j\in\bz_2}y_{i,j}\in\oplus_{i,j\in\bz_2}(\ga_{1,i}\odot\ga_{2,j})\,,
\end{split}
\end{align*}
 the involution, which by a minor abuse of notation we continue
to denote by $^*$, and  the multiplication on $\ga_1\, \circled{\text{{\tiny F}}}\, \ga_2$ are defined as (see also \emph{e.g.} \cite{CDF}) 
\begin{equation}
\label{prstc}
\begin{split}
x^*:=&\sum_{i,j\in\bz_2}\eeps(i,j)x_{i,j}^*\, ,\\
xy:=&\sum_{i,j,k,l\in\bz_2}\eeps(j,k)x_{i,j}y_{k,l}\,.
\end{split}
\end{equation}
%
%The tensor product $\ga_1\odot\ga_2$, endowed with the involution and product given in \eqref{prstc},
%is called the (algebraic) Fermi tensor product of $\ga_1$ and $\ga_2$, and is denoted by
%$\ga_1\, \circled{\rm{{\tiny F}}}\, \ga_2$.
The $*$-algebra thus obtained also carries a $\bz_2$-grading, which
is induced by the $*$-automorphism $\th=\th_1\, \circled{\rm{{\tiny F}}}\, \th_2$ given on the elementary tensors by
\begin{equation*}
\label{aupf}
(\th_1\, \circled{\rm{{\tiny F}}}\, \th_2)(a_1\, \circled{\rm{{\tiny F}}}\, a_2):=\th_1(a_1)\, \circled{\rm{{\tiny F}}}\, \th_2(a_2)\,,\quad a_1\in\ga_1\,,\,\, a_2\in\ga_2\,.
\end{equation*}
where $a_1\, \circled{\rm{{\tiny F}}}\, a_2$ is nothing but  $a_1\otimes a_2$ thought of
as an element of the $\bz_2$-graded $*$-algebra $\ga_1\, \circled{\text{{\tiny F}}}\, \ga_2$,
since
$\ga_1\, \circled{\rm{{\tiny F}}}\, \ga_2=\ga_1\odot \ga_2\,$ as linear spaces.
As of now, we will use $a_1\otimes a_2$ and $a_1\, \circled{\rm{{\tiny F}}}\, a_2$ interchangeably 
when no confusion can occur.\\
The even and odd part of the Fermi product are respectively
\begin{equation*}
\label{picm}
\begin{split}
\big(\ga_1\, \circled{\rm{{\tiny F}}}\, \ga_2\big)_+:=&\big(\ga_{1,+}\odot\ga_{2,+}\big)\oplus\big(\ga_{1,-}\odot\ga_{2,-}\big)\,,\\
\big(\ga_1\, \circled{\rm{{\tiny F}}}\, \ga_2\big)_-:=&\big(\ga_{1,+}\odot\ga_{2,-}\big)\oplus\big(\ga_{1,-}\odot\ga_{2,+}\big)\,.
\end{split}
\end{equation*}

\medskip

For $\om_i\in\cs(\ga_i)$, $i=1,2$, the state $\psi_{\om_1,\om_2}$ has a counterpart in $\ga_1\, \circled{\rm{{\tiny F}}}\, \ga_2$ by means of the product functional $\om_1\times \om_2$, defined as usual by
$$
\om_1\times \om_2\bigg(\sum_{j=1}^n a_{1,j}\circled{\rm{{\tiny F}}}\, a_{2,j}\bigg):=\sum_{j=1}^n \om_1(a_{1,j})\om_2(a_{2,j})\,,
$$
for all $\sum_{j=1}^n a_{1,j} \circled{\rm{{\tiny F}}}\, a_{2,j}\in \ga_1\, \circled{\rm{{\tiny F}}}\, \ga_2$. Contrarily to the case of trivial grading, the map defined above is not necessarily positive. The following proposition, which generalizes the results obtained in \cite{AM1} for the CAR algebra, gives a necessary and sufficient condition for the positivity.
\begin{prop}\label{converse}
Let $(\ga_1,\th_1), (\ga_2,\th_2)$ be $\bz_2$-graded $C^*$-algebras, and
$\om_1\in\cs(\ga_1)$, $\om_2\in\cs(\ga_2)$. Then
 $\om_1\times\om_2$ is positive on $\ga_1\,\circled{\rm{{\tiny F}}}\,\ga_2$ if and only if at least one between
$\om_1$ and $\om_2$ is even. Moreover, $\om_1\times\om_2$ is even
if and only if both $\om_1$ and $\om_2$ are even.
\end{prop}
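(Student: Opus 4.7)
The plan splits into three parts: the two implications of the main equivalence, and the final sentence on evenness.

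\emph{Sufficiency.} Without loss of generality I assume $\om_2$ is even and exploit its covariant GNS representation $(\ch_{\om_2},\pi_{\om_2},\xi_{\om_2},V_2)$, with $V_2$ the self-adjoint unitary on $\ch_{\om_2}$ implementing $\th_2$ and fixing $\xi_{\om_2}$; paired with an ordinary GNS triple $(\ch_{\om_1},\pi_{\om_1},\xi_{\om_1})$ of $\om_1$, I define $\Pi\colon\ga_1\ftp\ga_2\to\cb(\ch_{\om_1}\otimes\ch_{\om_2})$ on homogeneous tensors by $\Pi(a_1\ftp a_2):=\pi_{\om_1}(a_1)\otimes V_2\pi_{\om_2}(a_2)$ when $a_1\in\ga_{1,-}$ and $\Pi(a_1\ftp a_2):=\pi_{\om_1}(a_1)\otimes \pi_{\om_2}(a_2)$ when $a_1\in\ga_{1,+}$, then extended by linearity. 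The verification that $\Pi$ is a $^*$-homomorphism boils down to the covariance $V_2\pi_{\om_2}(a)V_2=\pi_{\om_2}(\th_2(a))$, i.e.\ $V_2\pi_{\om_2}(a)=\partial(a)\pi_{\om_2}(a)V_2$ on homogeneous $a$, which produces exactly the sign $\eeps$ appearing in the multiplication and involution of \eqref{prstc}. Since $V_2\xi_{\om_2}=\xi_{\om_2}$, the unit vector $\xi_{\om_1}\otimes\xi_{\om_2}$ realizes $\om_1\times\om_2$ as a vector state of $\Pi$, whence $\om_1\times\om_2\geq 0$.

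\emph{Necessity.} I argue by contrapositive. If neither $\om_1$ nor $\om_2$ is even, then there exist odd $a_1\in\ga_{1,-}$ and $a_2\in\ga_{2,-}$ with $\om_1(a_1)\om_2(a_2)\neq 0$. Because $\eeps(-1,-1)=-1$, \eqref{prstc} gives $(a_1\ftp a_2)^*=-a_1^*\ftp a_2^*$, so both $y:=a_1\ftp a_2-a_1^*\ftp a_2^*$ and $z:=i(a_1\ftp a_2+a_1^*\ftp a_2^*)$ are self-adjoint. A direct calculation yields $(\om_1\times\om_2)(y)=2i\,\mathrm{Im}\bigl(\om_1(a_1)\om_2(a_2)\bigr)$ and $(\om_1\times\om_2)(z)=2i\,\mathrm{Re}\bigl(\om_1(a_1)\om_2(a_2)\bigr)$, both purely imaginary. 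Positivity of $\om_1\times\om_2$ would force these numbers to be real and hence zero, contradicting $\om_1(a_1)\om_2(a_2)\neq 0$.

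\emph{Evenness, and the main obstacle.} By definition $\om_1\times\om_2$ is even iff $(\om_1\times\om_2)\circ(\th_1\ftp\th_2)=\om_1\times\om_2$; evaluating this identity on $a\ftp 1_{\ga_2}$ and $1_{\ga_1}\ftp b$ reduces respectively to $\om_1\circ\th_1=\om_1$ and $\om_2\circ\th_2=\om_2$, i.e.\ both states are even; the converse is immediate. The main technical hurdle I anticipate is the sign-bookkeeping in the sufficiency step: the match between the Fermi sign $\eeps(j,k)$ and the signs acquired when commuting powers of $V_2$ past $\pi_{\om_2}$ is exact but delicate, and the asymmetric rôle played by $\ga_1$ and $\ga_2$ in the definition of $\Pi$ is forced by the fact that only one factor is equipped with a grading unitary at the Hilbert-space level.
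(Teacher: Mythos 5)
Your proposal is correct, and it genuinely departs from the paper in the sufficiency step: the paper disposes of the ``if'' direction by citing \cite[Proposition 7.1]{CDF}, whereas you prove it from scratch with a Klein-type twisted representation, inserting the grading unitary $V_2$ on the second leg precisely when the first-leg element is odd. Your sign bookkeeping does close: the commutation rule $\pi_{\om_2}(a_2)V_2=\partial(a_2)\,V_2\,\pi_{\om_2}(a_2)$ reproduces exactly the factor $\eeps(j,k)$ of \eqref{prstc} in both the product and the involution (with $V_2^2=I$ handling the addition of grades mod $2$), and $V_2\xi_{\om_2}=\xi_{\om_2}$ makes the vector state at $\xi_{\om_1}\otimes\xi_{\om_2}$ equal to $\om_1\times\om_2$, so positivity follows; your route buys more than the bare citation, since it realizes $\om_1\times\om_2$ as a vector state of a bounded $*$-representation (hence norm-continuous), and it is visibly the one-sided degeneration of the Fermi product of representations that the paper builds in Section \ref{hilbert} and exploits in Proposition \ref{GNSprod} when \emph{both} states are even. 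One point you should make explicit rather than wave at in your closing remark: because $\eeps$ in \eqref{prstc} couples the grade of the \emph{second} factor of $x$ with the grade of the \emph{first} factor of $y$, the two legs are not interchangeable by relabelling, so ``without loss of generality $\om_2$ is even'' needs either the mirrored construction (put $V_1$ to the \emph{right} of $\pi_{\om_1}(a_1)$ whenever $a_2$ is odd; the same computation works) or the graded flip $a_1\ftp a_2\mapsto\eps(a_1,a_2)\,a_2\ftp a_1$, a $*$-isomorphism under which $\om_2\times\om_1$ pulls back to $\om_1\times\om_2$ because the sign $\eps(a_1,a_2)$ only bites on odd--odd tensors, where the even marginal annihilates both sides. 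Your necessity and evenness arguments are essentially the paper's: the paper first reduces to self-adjoint odd $a_1,a_2$ and concludes that the nonzero real number $\om_1(a_1)\om_2(a_2)$ is forced to be purely imaginary, while you skip that reduction by testing the two self-adjoint combinations $y$ and $z$ to kill $\mathrm{Im}$ and $\mathrm{Re}$ separately --- a cosmetic variant resting on the same two facts, namely $\eeps(-1,-1)=-1$ and the hermiticity of positive functionals on a unital $*$-algebra, and the unit-tensor evaluation for the evenness claim is the paper's argument verbatim.
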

\begin{proof}
The "if" part follows from \cite[Proposition 7.1]{CDF}.
For the "only if" part we shall argue by contradiction, exactly as is done in the proof of \cite[Theorem 1]{AM1}.
Suppose neither $\om_1$ nor $\om_2$ is even. Then there exist odd
$a_1\in\ga_1$ and $a_2\in\ga_2$ such that
$\om_1(a_1)\neq 0$ and $\om_2(a_2)\neq0$. Furthermore, there is no loss
of generality if we also assume that $a_1$ and $a_2$ are both self-adjoint.
Now, on the one hand we have $\om_1\times \om_2\,(a_1\,\circled{\rm{{\tiny F}}}\, a_2)=\om_1(a_1)\om_2(a_2)$ is a real number different from
zero. On the other hand, we  also have
\begin{align*}
\overline{\om_1\times\om_2}\, (a_1\,\circled{\rm{{\tiny F}}}\, a_2)&=\om_1\times\om_2\, ((a_1\,\circled{\rm{{\tiny F}}}\, a_2)^*)\\
&=-\om_1\times\om_2(a_1^*\otimes a_2^*)\\
&=-\om_1\times\om_2(a_1\,\circled{\rm{{\tiny F}}}\, a_2)
\end{align*}
from which we see that  $\om_1\times\om_2\, (a_1\,\circled{\rm{{\tiny F}}}\, a_2)$ must be zero, and a contradiction is thus arrived at.\\
If $\om_1$ and $\om_2$ are both even, then the product is seen at once to be even as well. Conversely if
$\om_1\times\om_2$ is even, then $\om_1$ and $\om_2$ must be both even. For instance, for $\om_1$ we have
$$
\om_1(\theta_1(a_1))=\om_1\times\om_2\, (\theta_1\,\circled{\rm{{\tiny F}}}\, \theta_2 (a_1\otimes \idd_{\ga_2}))=\om_1\times\om_2\, (a_1\otimes \idd_{\ga_2})=\om_1(a_1)\,,
$$
for every $a_1\in\ga_1$.
\end{proof}
We would like to remark that the above proposition 
holds for products of an arbitrary number $n\geq 2$ of states $\om_1, \om_2, \ldots, \om_n$. More precisely,
the product functional $\om_1\times\om_2\times\ldots\times\om_n$
will be positive if and only if $n-1$ of the marginal states are even, and it will be
even if and only if all the marginal states are so.

\section{Fermi tensor product of Hilbert spaces}\label{hilbert}

A $\bz_2$-graded Hilbert space is a pair $(\ch, U)$, where $\ch$ is
a (complex) Hilbert space and $U$ a self-adjoint unitary acting on $\ch$.\\
Note that $\ch$ decomposes into a direct sum
$$\ch=\ch_+\oplus\ch_-$$
 where $\ch_+:={\rm Ker}(I-U)$,
$\ch_-:={\rm Ker}(I+U)$, and $I$ is the identity operator.
As usual, vectors belonging to $\ch_+$ ($\ch_-$) are referred to as {\it even} ({\it odd}) vectors, and elements belonging to any of these subspaces are collectively referred to as
homogeneous vectors. The grade $\partial(\xi)$ of any homogeneous vector $\xi$ is $1$ or $-1$, according to whether it belongs to $\ch_+$ or $\ch_-$, respectively. Recalling that $(\cb(\ch),\ad_U)$ is a $\bz_2$-graded $*$-algebra, after applying a homogeneous element of $\cb(\ch)$ to a homogeneous vector in $\ch$ we get a homogenous vector whose grade can be easily determined, as the following result shows.
\begin{lem}
Let $(\ch, U)$ be a $\bz_2$-graded Hilbert space. If $T\in \cb({\ch})$ and
$\xi\in\ch$ are both homogeneous, then $T\xi$ is still homogeneous and
$$\partial(T\xi)=\partial(T)\partial(\xi).$$
\end{lem}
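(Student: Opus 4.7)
The proof is a short direct computation once one unpacks the meaning of the two gradings. Since $U$ is a self-adjoint unitary, $U^* = U$, and the grading $\ad_U$ on $\cb(\ch)$ says that a homogeneous $T\in\cb(\ch)$ satisfies $UTU = \partial(T)\,T$, or equivalently $UT = \partial(T)\,TU$. Similarly, $\xi\in\ch$ being homogeneous means $U\xi = \partial(\xi)\,\xi$.

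My plan is therefore to apply $U$ to $T\xi$ and use these two commutation/eigenvalue relations in turn:
\begin{equation*}
U(T\xi) = (UT)\xi = \partial(T)\,(TU)\xi = \partial(T)\,T(U\xi) = \partial(T)\partial(\xi)\,T\xi.
\end{equation*}
This shows that $T\xi$ is an eigenvector of $U$ with eigenvalue $\partial(T)\partial(\xi)\in\{+1,-1\}$, so $T\xi$ lies in $\ch_+$ or $\ch_-$ according to the sign of $\partial(T)\partial(\xi)$, i.e. $T\xi$ is homogeneous with $\partial(T\xi)=\partial(T)\partial(\xi)$.

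There is no genuine obstacle here; the only thing worth being careful about is translating the statement ``$T$ is homogeneous for $\ad_U$'' into the working form $UT=\partial(T)\,TU$, which uses $U=U^*$ crucially. Once that is in place, the lemma is immediate from the single line above.
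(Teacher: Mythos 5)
Your proof is correct and is essentially the paper's argument: the paper computes $UT\xi=(UTU^*)U\xi$ in one representative case (with the other three cases analogous), while you perform the identical computation uniformly by encoding the eigenvalue/commutation relations as $U\xi=\partial(\xi)\,\xi$ and $UT=\partial(T)\,TU$. Nothing is missing; the use of $U=U^*$ to pass from $\ad_U(T)=\partial(T)\,T$ to the commutation relation is exactly the point the paper's insertion of $U^*U$ exploits.
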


\begin{proof}
There are four cases to deal with, which can be treated in the same way.
For instance, if $T$ is even and $\xi$ is odd, then we have
$UT\xi=(UTU^*)U\xi=T(-\xi)=-T\xi$, that is $T\xi$ is odd.
\end{proof}
Let $(\ga, \theta)$ be a $\bz_2$-graded $C^*$-algebra, and let $\om$ be an even state of
$\ga$, whose covariant GNS representation is
$(\ch_\om, \pi_\om, \xi_\om, V_{\theta, \om})$. Thus, $(\ch_\om, V_{\theta, \om})$ is a $\bz_2$-graded Hilbert space.
The following result allows us to realize the even part of $\ch_\om$.
\begin{prop}
The even part of $\ch_\om$ is given by $\ch_{\om, +}=\overline{\pi_\om(\ga_+)\xi_\om}$
\end{prop}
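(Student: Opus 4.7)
The plan is to prove the two inclusions separately, using the characterization of the even subspace as the range of the orthogonal projection $P_+ := \frac{1}{2}(I + V_{\theta,\om})$, which is the spectral projection of $V_{\theta,\om}$ onto the eigenvalue $+1$.

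For the inclusion $\overline{\pi_\om(\ga_+)\xi_\om} \subseteq \ch_{\om,+}$, I would take any $a\in\ga_+$, so $\theta(a)=a$, and compute
\[
V_{\theta,\om}\pi_\om(a)\xi_\om = V_{\theta,\om}\pi_\om(a)V_{\theta,\om}\xi_\om = \pi_\om(\theta(a))\xi_\om = \pi_\om(a)\xi_\om,
\]
using $V_{\theta,\om}\xi_\om=\xi_\om$ and the covariance relation. Hence $\pi_\om(\ga_+)\xi_\om\subseteq\ch_{\om,+}$, and the inclusion follows by closedness of $\ch_{\om,+}$.

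For the reverse inclusion, the key observation is that for any $a\in\ga$,
\[
P_+\pi_\om(a)\xi_\om = \tfrac{1}{2}\bigl(\pi_\om(a) + V_{\theta,\om}\pi_\om(a)V_{\theta,\om}\bigr)\xi_\om = \pi_\om(\eps_1(a))\xi_\om,
\]
where $\eps_1 = \frac{1}{2}(\id_\ga + \theta)$ is the conditional expectation onto $\ga_+$. Thus $P_+\bigl(\pi_\om(\ga)\xi_\om\bigr)=\pi_\om(\ga_+)\xi_\om$. Since $\pi_\om(\ga)\xi_\om$ is dense in $\ch_\om$ by the GNS construction, and $P_+$ is continuous, I get
\[
\ch_{\om,+} = P_+(\ch_\om) = P_+\bigl(\overline{\pi_\om(\ga)\xi_\om}\bigr) \subseteq \overline{P_+(\pi_\om(\ga)\xi_\om)} = \overline{\pi_\om(\ga_+)\xi_\om},
\]
which completes the proof.

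There is no real obstacle here: the result is essentially a bookkeeping argument combining the covariance of the GNS representation with the continuity of the even projection $P_+$. The only subtlety worth flagging is making sure to invoke $V_{\theta,\om}\xi_\om=\xi_\om$ at the right place, but this is granted by the choice of the covariant GNS construction for even states.
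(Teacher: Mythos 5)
Your proof is correct and follows essentially the same route as the paper: the forward inclusion is the same covariance computation, and your use of the projection $P_+=\tfrac{1}{2}(I+V_{\theta,\om})$ together with density of $\pi_\om(\ga)\xi_\om$ is just an operator-language repackaging of the paper's averaging identity $\eta=\tfrac{1}{2}(\eta+V_{\theta,\om}\eta)$ applied along an approximating sequence. No gaps.
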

\begin{proof}
From the obvious inclusion ${\pi_\om(\ga_+)\xi_\om}\subset\ch_{\om, +}$, one also has
$\overline{{\pi_\om(\ga_+)\xi_\om}}\subset\ch_{\om, +}$.\\
For the converse inclusion, pick $\eta\in\ch_{\om, +}$, that is $V_{\theta, \om}\eta=\eta$.
By cyclicity of $\xi_\om$, there exists a sequence $\{a_n\}_{n\in\bn}$
such that $\eta=\lim_{n\rightarrow\infty}\pi_\om(a_n)\xi_\om$. But then
$V_{\theta, \om}\eta=V_{\theta, \om}\big(\lim_{n\rightarrow\infty}\pi_\om(a_n)\xi_\om\big)=\lim_{n\rightarrow\infty}\pi_\om(\theta(a_n))\xi_\om$.
From the equality $\eta=\frac{1}{2}(\eta+V_{\theta, \om}\eta)$ we see that $\eta=\lim_{n\rightarrow\infty}\pi_\om(\varepsilon_1(a_n))\xi_\om$, and
the thesis follows since $\varepsilon_1(a_n)\in \ga_+$.
\end{proof}

The tensor product $\ch_1\otimes\ch_2$ of
 two $\bz_2$-graded Hilbert spaces $(\ch_1, U_1)$ and $(\ch_2, U_2)$
can be endowed  with  the natural grading induced by the self-adjoint
unitary $U_1\otimes U_2$. Note that a simple tensor
$\xi_1\otimes\xi_2$ in $\ch_1\otimes\ch_2$ is even precisely when 
$\xi_1$ and $\xi_2$ are both even or when they are both odd.
In the sequel, most of the times the Hilbert space $\ch_1\otimes\ch_2$ will be
thought of as a graded Hilbert space, with grading $U_1\otimes U_2$.
Rather than write $(\ch_1\otimes\ch_2, U_1\otimes U_2 )$, we will simply denote this graded Hilbert space
by $\ch_1\,\circled{\rm{{\tiny F}}}\,\ch_2$.\\
Given  $T_1\in \cb(\ch_1)$ and a homogeneous operator $T_2\in\cb(\ch_2)$,
we  define $T_1\odot T_2$ as the linear operator
acting on $\ch_1\odot\ch_2$ as
\begin{equation}\label{fermiprodop}
T_1\odot T_2 (\xi \odot \eta)=\varepsilon (T_2, \xi )(T_1 \xi\odot T_2 \eta)
\end{equation}
for any homogeneous $\xi\in\ch_1$ and any $\eta\in\ch_2$.
Clearly, by using \eqref{fermiprodop} one can extend $T_1\odot T_2$ to the whole vector space
$\ch_1\odot\ch_2$. Furthermore, it is easy to see that $T_1\odot T_2$ is a bounded operator, and thus it can be extended by continuity to
$\ch_1\circled{\rm{{\tiny F}}}\,\ch_2$. Its unique extension is denoted by $T_1\,\circled{\rm{{\tiny F}}}\, T_2$.\\

Let now $(\ga_1, \theta_1), (\ga_2, \theta_2)$  be $\bz_2$-graded
$C^*$-algebras. If $\pi_i: \ga_i\rightarrow\cb(\ch_i)$ are grading-equivariant representations, it is possible to
define a map,
$\pi_1\,\circled{\rm{{\tiny F}}}\, \pi_2$, of $\ga_1\,\circled{\rm{{\tiny F}}}\, \ga_2$ acting on the
Fermi tensor product $\ch_1\,\circled{\rm{{\tiny F}}}\, \ch_2$ as
$$\pi_1\,\circled{\rm{{\tiny F}}}\, \pi_2 (a_1\circled{\rm{{\tiny F}}}\, a_2):=\pi_1(a_1)\,\circled{\rm{{\tiny F}}}\, \pi_2(a_2)$$
for every $a_1\in\ga_1$ and $a_2\in\ga_2$.
\begin{prop}
Under the above assumptions, one has that $\pi_1\,\circled{\rm{{\tiny F}}}\, \pi_2$ is a $*$-representation
of $\ga_1\,\circled{\rm{{\tiny F}}}\, \ga_2$ acting on the
Fermi tensor product $\ch_1\,\circled{\rm{{\tiny F}}}\, \ch_2$ .
\end{prop}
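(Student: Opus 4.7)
The plan is to reduce everything to homogeneous simple tensors, where the two sign conventions (the one entering the Fermi product of $*$-algebras through $\eeps$ and the one entering the Fermi product of operators through $\varepsilon$) can be compared directly. Since $\ga_1\ftp\ga_2=\ga_1\odot\ga_2$ as a vector space, the bilinear assignment $(a_1,a_2)\mapsto\pi_1(a_1)\ftp\pi_2(a_2)$ determines $\pi_1\ftp\pi_2$ as a well-defined linear map into $\cb(\ch_1\ftp\ch_2)$, so only multiplicativity and the $*$-property need to be checked, and by linearity it is enough to do so on elementary tensors of homogeneous elements.

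For multiplicativity, I would take $a_1\in\ga_{1,i}$, $a_2\in\ga_{2,j}$, $b_1\in\ga_{1,k}$, $b_2\in\ga_{2,l}$ and compute both sides. On the algebra side, \eqref{prstc} gives
\[
(\pi_1\ftp\pi_2)\big((a_1\ftp a_2)(b_1\ftp b_2)\big)=\eeps(j,k)\,\pi_1(a_1b_1)\ftp\pi_2(a_2b_2),
\]
whereas on the operator side, applying $\bigl(\pi_1(a_1)\ftp\pi_2(a_2)\bigr)\bigl(\pi_1(b_1)\ftp\pi_2(b_2)\bigr)$ to a homogeneous vector $\xi\ftp\eta$ and using \eqref{fermiprodop} twice produces the sign $\varepsilon(\pi_2(b_2),\xi)\,\varepsilon(\pi_2(a_2),\pi_1(b_1)\xi)$. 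Here the grading-equivariance of $\pi_1,\pi_2$ ensures that $\pi_2(b_2),\pi_2(a_2)$ have grades $l,j$ and $\pi_1(b_1)\xi$ has grade $k\,\partial(\xi)$ (by the lemma preceding the proposition). A short case analysis on the four possible values of $(j,k)$ then shows that this compound sign equals $\eeps(j,k)\,\varepsilon(\pi_2(a_2b_2),\xi)$, matching the algebra side.

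For the $*$-property, the natural preliminary step is the identity
\[
(T_1\ftp T_2)^*=\eeps(\partial T_1,\partial T_2)\,T_1^*\ftp T_2^*
\]
for homogeneous $T_i\in\cb(\ch_i)$; this is obtained by testing $\langle(T_1\ftp T_2)(\xi\ftp\eta),\xi'\ftp\eta'\rangle$ on homogeneous vectors and observing that $\langle T_1\xi,\xi'\rangle$ can only be nonzero when $\partial(\xi)=\partial(T_1)\partial(\xi')$, which forces the required matching of signs. Granted this identity, combining it with the definition of $x^*$ in \eqref{prstc} and the equivariance of $\pi_1,\pi_2$ immediately gives $(\pi_1\ftp\pi_2)(x^*)=(\pi_1\ftp\pi_2)(x)^*$ on homogeneous elementary tensors, and then on all of $\ga_1\ftp\ga_2$ by conjugate linearity.

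The only genuine difficulty is the sign bookkeeping: everything else is formal. In particular, the verification that $\varepsilon(\pi_2(b_2),\xi)\,\varepsilon(\pi_2(a_2),\pi_1(b_1)\xi)=\eeps(j,k)\,\varepsilon(\pi_2(a_2b_2),\xi)$ is the heart of the matter, and I would present it as a small table over the four cases $(j,k)\in\{\pm 1\}^2$ rather than a single computation.
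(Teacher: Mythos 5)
Your proposal is correct and takes essentially the same route as the paper: reduction to homogeneous elementary tensors and homogeneous vectors, computation of both sides via \eqref{prstc} and \eqref{fermiprodop}, and a sign match that in the multiplicative case is exactly the paper's identity $\varepsilon(a_2,b_1)\varepsilon(a_2b_2,\xi)=\varepsilon(b_2,\xi)\varepsilon(a_2,\pi_1(b_1)\xi)$ (your $\eeps(j,k)$ being $\varepsilon(a_2,b_1)$). The only difference is cosmetic: you package the adjoint step as the standalone identity $(T_1\ftp T_2)^*=\eeps(\partial T_1,\partial T_2)\,T_1^*\ftp T_2^*$, whereas the paper runs the same matrix-element argument inline, relying on the identical observation that $\langle \xi,\pi_1(a_1)\xi'\rangle\neq 0$ forces the grades to match.
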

\begin{proof}
All we have to do is ascertain that $\pi_1\,\circled{\rm{{\tiny F}}}\, \pi_2$
preserves both product and involution of $\ga_1\,\circled{\rm{{\tiny F}}}\, \ga_2$.
To this end, it is enough to verify the involved equalities only on homogeneous elements.\\
Let $a_i\in\ga_i$ be homogeneous elements, $i=1, 2$, $\xi, \xi'\in\ch_1$
$\eta, \eta' \in \ch_2$ be homogeneous vectors, and observe that $\p_i(a_i)$ has the same grade of $a_i$, since $\pi_i$ is grading-equivariant. \\
On the one hand, from \eqref{prstc} and \eqref{fermiprodop}, we have
\begin{align*}
&\langle\pi_1\,\circled{\rm{{\tiny F}}}\, \pi_2\big((a_1\otimes a_2)^*\big) (\xi\otimes\eta), \xi'\otimes\eta' \rangle\\
=&\varepsilon(a_1, a_2)\langle \pi_1(a_1^*)\,\circled{\rm{{\tiny F}}}\,\pi_2(a_2^*)\, (\xi\otimes\eta), \xi'\otimes\eta'  \rangle\\
=&\varepsilon(a_1, a_2)\varepsilon(a_2, \xi) \langle \pi_1(a_1^*)\xi\otimes\pi_2(a_2^*)\eta, \xi'\otimes\eta'\rangle\\
=&\varepsilon(a_1, a_2)\varepsilon(a_2, \xi)
\langle \xi, \pi_1(a_1) \xi' \rangle \langle \eta, \pi_2(a_2) \eta' \rangle\,.
\end{align*}
On the other hand, from \eqref{fermiprodop} we have
\begin{align*}
&\langle\big(\pi_1\,\circled{\rm{{\tiny F}}}\, \pi_2(a_1\otimes a_2)\big)^* (\xi\otimes\eta), \xi'\otimes\eta' \rangle\\
=&\langle \xi\otimes\eta,\pi_1\,\circled{\rm{{\tiny F}}}\, \pi_2(a_1\otimes a_2) (\xi'\otimes\eta' )\rangle\\
=&\varepsilon(a_2, \xi') \langle \xi\otimes\eta, \pi_1(a_1)\xi'\otimes\pi_2(a_2)\eta'\rangle\\
=&\varepsilon(a_2, \xi')\langle \xi, \pi_1(a_1) \xi' \rangle \langle \eta, \pi_2(a_2) \eta' \rangle\,,
\end{align*}
and the two expressions equal each other because 
$$\varepsilon(a_1, a_2)\varepsilon(a_2, \xi)=\varepsilon(a_2, \xi')$$
whenever $\langle \xi, \pi_1(a_1) \xi' \rangle$ is different from $0$,
as a painstaking inspection of the signs shows.\\

\noindent As for the product, pick homogeneous $a_1, b_1\in\ga_1$ and $a_2, b_2\in\ga_2$.
With homogeneous $\xi\in\ch_1, \eta\in\ch_2$ by applying \eqref{prstc} and \eqref{fermiprodop} we have
\begin{align*}
&\pi_1\,\circled{\rm{{\tiny F}}}\, \pi_2 \big((a_1\otimes a_2)(b_1\otimes b_2)\big)(\xi\otimes\eta)\\
=&\varepsilon(a_2, b_1)\pi_1\,\circled{\rm{{\tiny F}}}\, \pi_2(a_1b_1\otimes a_2b_2)(\xi\otimes\eta)\\
=&\varepsilon(a_2, b_1)\varepsilon(a_2 b_2, \xi)\pi_1(a_1b_1)\xi\otimes\pi_2(a_2b_2)\eta\\
=&\varepsilon(a_2, b_1)\varepsilon(a_2 b_2, \xi)\pi_1(a_1)\pi_1(b_1)\xi\otimes\pi_2(a_2)\pi_2(b_2)\eta\,.
\end{align*}
On the other hand, by applying  \eqref{fermiprodop} we have
\begin{align*}
&\pi_1\,\circled{\rm{{\tiny F}}}\, \pi_2 (a_1\otimes a_2)[ \pi_1\,\circled{\rm{{\tiny F}}}\, \pi_2 (b_1\otimes b_2)(\xi\otimes\eta)]\\
=&\varepsilon(b_2, \xi)\pi_1\,\circled{\rm{{\tiny F}}}\, \pi_2 (a_1\otimes a_2) \big(\pi_1(b_1)\xi\otimes\pi_2(b_2)\eta\big)\\
=&\varepsilon(b_2, \xi)\varepsilon(a_2, \pi_1(b_1)\xi)\pi_1(a_1)\pi_1(b_1)\xi\otimes \pi_2(a_2)\pi_2(b_2)\eta\,.
\end{align*}
Again, a painstaking inspection of the signs shows that
$$\varepsilon(a_2, b_1)\varepsilon(a_2 b_2, \xi)=\varepsilon(b_2, \xi)\varepsilon(a_2, \pi_1(b_1)\xi)$$
and the proof is complete.
\end{proof}
The following result says that $\pi_{\om_1}\,\circled{\rm{{\tiny F}}}\,\pi_{\om_2}$ is the GNS representation of the product state of two even states $\om_1$ and $\om_2$.
\begin{prop}
\label{GNSprod}
Let $(\ga_i, \theta_i)$ be $\bz_2$-graded $C^*$-algebras, $i=1, 2$.
If $\om_i\in\mathcal{S}(\mathcal{\ga}_i)$ are even states for $i=1, 2$ then
$$\pi_{\om_1\times\om_2}=\pi_{\om_1}\,\circled{\rm{{\tiny F}}}\,\pi_{\om_2}$$
up to unitary equivalence, as representations of the Fermi tensor product
$\ga_1\,\circled{\rm{{\tiny F}}}\,\ga_2$.
\end{prop}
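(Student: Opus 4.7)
The plan is to exhibit an explicit cyclic vector in $\ch_{\om_1}\,\circled{\rm{{\tiny F}}}\,\ch_{\om_2}$ whose associated vector state under $\pi_{\om_1}\,\circled{\rm{{\tiny F}}}\,\pi_{\om_2}$ recovers $\om_1\times\om_2$, and then to appeal to the standard uniqueness of the GNS construction up to unitary equivalence. Since by Proposition \ref{converse} the product $\om_1\times\om_2$ is itself an even (hence positive) state of $\ga_1\,\circled{\rm{{\tiny F}}}\,\ga_2$, its GNS triple $(\ch_{\om_1\times\om_2},\pi_{\om_1\times\om_2},\xi_{\om_1\times\om_2})$ is well defined and determined up to unitary equivalence by these two properties.

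The natural candidate is $\Omega:=\xi_{\om_1}\otimes\xi_{\om_2}\in \ch_{\om_1}\,\circled{\rm{{\tiny F}}}\,\ch_{\om_2}$. The crucial observation is that $\xi_{\om_1}$ and $\xi_{\om_2}$ are \emph{even} vectors, being fixed by the respective grading unitaries $V_{\theta_i,\om_i}$. Consequently, in the defining formula \eqref{fermiprodop} for the Fermi product of operators, the sign $\varepsilon(T_2,\xi_{\om_1})$ always equals $1$ when acting on $\Omega$, so the Fermi action collapses to the ordinary tensor product action:
\begin{equation*}
\bigl(\pi_{\om_1}\,\circled{\rm{{\tiny F}}}\,\pi_{\om_2}\bigr)(a_1\otimes a_2)\,\Omega=\pi_{\om_1}(a_1)\xi_{\om_1}\otimes\pi_{\om_2}(a_2)\xi_{\om_2}
\end{equation*}
for homogeneous $a_1,a_2$, and then by linearity on the whole algebraic Fermi product.

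With this identity in hand, two checks become essentially routine. First, the vector state at $\Omega$ yields
\begin{equation*}
\langle\pi_{\om_1}\,\circled{\rm{{\tiny F}}}\,\pi_{\om_2}(a_1\otimes a_2)\Omega,\Omega\rangle=\om_1(a_1)\om_2(a_2)=(\om_1\times\om_2)(a_1\otimes a_2),
\end{equation*}
so the vector state at $\Omega$ agrees with $\om_1\times\om_2$ on all elementary tensors and hence on all of $\ga_1\,\circled{\rm{{\tiny F}}}\,\ga_2$. Second, $\Omega$ is cyclic: the vectors $\pi_{\om_1}(a_1)\xi_{\om_1}\otimes\pi_{\om_2}(a_2)\xi_{\om_2}$, as $a_i$ ranges over $\ga_i$, span a dense subspace of $\ch_{\om_1}\otimes\ch_{\om_2}$ by cyclicity of $\xi_{\om_1}$ and $\xi_{\om_2}$ for the individual GNS representations.

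Putting these together, $(\ch_{\om_1}\,\circled{\rm{{\tiny F}}}\,\ch_{\om_2},\pi_{\om_1}\,\circled{\rm{{\tiny F}}}\,\pi_{\om_2},\Omega)$ is a cyclic representation of $\ga_1\,\circled{\rm{{\tiny F}}}\,\ga_2$ whose vector state at $\Omega$ is $\om_1\times\om_2$, so the uniqueness of the GNS construction yields a unitary $W:\ch_{\om_1\times\om_2}\to\ch_{\om_1}\,\circled{\rm{{\tiny F}}}\,\ch_{\om_2}$ with $W\xi_{\om_1\times\om_2}=\Omega$ and $W\pi_{\om_1\times\om_2}(\cdot)W^*=\pi_{\om_1}\,\circled{\rm{{\tiny F}}}\,\pi_{\om_2}(\cdot)$. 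I do not expect any serious obstacle: the only place where one must be careful is in verifying that the evenness of $\xi_{\om_1}$ really does trivialize the sign twist in \eqref{fermiprodop}, and this is the key simplification that makes the argument parallel the classical tensor product case.
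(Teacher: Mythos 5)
Your proposal is correct and takes essentially the same route as the paper's own proof: both identify $\xi_{\om_1}\otimes\xi_{\om_2}$ as a cyclic vector for $\pi_{\om_1}\,\circled{\rm{{\tiny F}}}\,\pi_{\om_2}$, exploit the evenness of the GNS vectors to trivialize the sign $\varepsilon(\cdot,\xi_{\om_1})$ in \eqref{fermiprodop} so that the vector state at this vector equals $\om_1\times\om_2$ on homogeneous elementary tensors, and conclude by uniqueness of the GNS construction. Your explicit check of cyclicity is a detail the paper merely asserts, but it does not alter the approach.
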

\begin{proof}
First note that $\pi_{\om_1}\,\circled{\rm{{\tiny F}}}\,\pi_{\om_2}$ is a cyclic representation
of $\ga_1\,\circled{\rm{{\tiny F}}}\,\ga_2$ with cyclic vector
$\xi_{\om_1}\otimes\xi_{\om_2}$,  where
$\xi_{\om_i}$ is the GNS vector of $\om_i$ for $i=1, 2$. Therefore, to conclude it is enough to make sure the vector
state associated with $\xi_{\om_1}\otimes\xi_{\om_2}$ coincides with the product
state $\om_1\times\om_2$. To this aim, consider homogeneous $a_i\in\ga_i$, $i=1, 2$. After recalling that the cyclic vectors are even, by applying \eqref{fermiprodop} one finds
\begin{align*}
&\langle \pi_{\om_1}\,\circled{\rm{{\tiny F}}}\,\pi_{\om_2}(a_1\otimes a_2) \xi_{\om_1}\otimes\xi_{\om_2} , \xi_{\om_1}\otimes\xi_{\om_2}\rangle\\
=&\varepsilon(a_2, \xi_{\om_1})\langle \pi_{\om_1}(a_1)\xi_{\om_1}\otimes\pi_{\om_2}(a_2)\xi_{\om_2}, \xi_{\om_1}\otimes\xi_{\om_2} \rangle\\
=&
\langle\pi_{\om_1}(a_1)\xi_{\om_1}, \xi_{\om_1}\rangle \langle\pi_{\om_2}(a_2)\xi_{\om_2}, \xi_{\om_2}\rangle\\
=&\om_1(a_1)\om_2(a_2)=\om_1\times\om_2\,(a_1\otimes a_2)
\end{align*}
which ends the proof.
\end{proof}

\section{Norms on $\bz_2$-graded tensor product of $C^*$-algebras}\label{spnorm}
In this section we  undertake a detailed study of the so-called spatial norm on the Fermi tensor product of $\bz_2$-graded $C^*$-algebras. 
We show that it is minimal besides being a cross norm, as is the maximal one already introduced in \cite{CDF}. 
Given $\bz_2$-graded $C^*$-algebras $(\ga_1,\th_1)$ and $(\ga_2,\th_2)$, the latter norm is given  by
$$
\|x\|_{\max}:=\sup\{\|\pi(x)\|: \pi\,\,\text{is a representation}\}\,,
$$
for all $x\in\ga_1\, \circled{\rm{{\tiny F}}}\,\ga_2$, and it
is obviously the biggest norm on $\ga_1\, \circled{\rm{{\tiny F}}}\,\ga_2$.\\
The spatial norm is defined in terms of the GNS representations of products
of even states. More precisely,
 
$$
\|x\|_{\min}:=\sup \{\|\pi_{\om_1\times\om_2}(x)\|: \om_1\in\cs_+(\ga_1)\,,\,\, \om_2\in\cs_+(\ga_2)\}\,,
$$
for all $x\in\ga_1\,\circled{\rm{{\tiny F}}}\,\ga_2$.
In principle, the above definition might provide only a seminorm. 
In fact, it is known that it actually defines a norm. For want of a reference, we nonetheless include a full proof of this fact. 
\begin{prop}
Under the above assumptions,  the seminorm $\|\cdot\|_{\min}$ is a
$C^*$-norm on the Fermi product $\ga_1\,\circled{\rm{{\tiny F}}}\,\ga_2$.
\end{prop}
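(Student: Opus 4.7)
The only real content is definiteness, since the $C^*$-seminorm properties of $\|\cdot\|_{\min}$ are inherited directly from its definition as a supremum of $C^*$-seminorms $\|\pi_{\om_1 \times \om_2}(\cdot)\|$, the representations in question being well defined by Propositions \ref{converse} and \ref{GNSprod}. The plan is, given $x \neq 0$ in $\ga_1 \circled{\rm{{\tiny F}}} \ga_2$, to produce a pair of even states $\om_1, \om_2$ for which $\pi_{\om_1 \times \om_2}(x) \neq 0$.

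The workhorse is a short separation lemma: in any $\bz_2$-graded $C^*$-algebra $(\ga, \theta)$, the linear functionals $a \mapsto \om(c^* a)$ (and symmetrically $a \mapsto \om(a d)$) indexed by $\om \in \cs_+(\ga)$ and $c \in \ga$ separate the points of $\ga$. Indeed, every even state satisfies $\om = \om \circ \eps_1$; so if $\om(v^* v) = 0$ for every even $\om$, then $\eps_1(v^* v) = 0$ by the surjectivity of the restriction $\cs_+(\ga) \to \cs(\ga_+)$ (Lemma \ref{extension}), and the identity $\eps_1(v^* v) = \tfrac12\bigl(v^* v + \theta(v)^* \theta(v)\bigr)$ then forces $v = 0$. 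Specialising $c = v$ in the displayed functional yields the claim.

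Now write $x = \sum_i a_i \otimes b_i$ with $\{b_i\}$ linearly independent in $\ga_2$; since $x \neq 0$, some $a_k \neq 0$. The separation lemma applied to $\ga_1$ delivers $\om_1 \in \cs_+(\ga_1)$ and $c \in \ga_1$ with $\b_k := \om_1(c^* a_k) \neq 0$. The scalars $\b_i := \om_1(c^* a_i)$ are thus not all zero, so $\sum_i \b_i b_i \neq 0$ by linear independence; a second application of the lemma produces $\om_2 \in \cs_+(\ga_2)$ and $d \in \ga_2$ with $\sum_i \b_i\, \om_2(b_i d) \neq 0$.

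To conclude, take the slice elements $y := c \otimes 1$ and $z := 1 \otimes d$ in $\ga_1 \circled{\rm{{\tiny F}}} \ga_2$. Because the unit is even, every Fermi sign appearing in \eqref{prstc} when computing $y^* x z$ collapses to $+1$, so $y^* x z = \sum_i c^* a_i \otimes b_i d$, and therefore
\[
(\om_1 \times \om_2)(y^* x z) \;=\; \sum_i \om_1(c^* a_i)\, \om_2(b_i d) \;=\; \sum_i \b_i\, \om_2(b_i d) \;\neq\; 0 .
\]
Interpreting this scalar via the GNS datum for $\om_1\times\om_2$ as $\langle \pi_{\om_1 \times \om_2}(x)\, \pi_{\om_1 \times \om_2}(z)(\xi_{\om_1}\otimes\xi_{\om_2}),\, \pi_{\om_1 \times \om_2}(y)(\xi_{\om_1}\otimes\xi_{\om_2})\rangle$ forces $\pi_{\om_1 \times \om_2}(x) \neq 0$, whence $\|x\|_{\min} > 0$. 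The one delicate point in the whole plan is the Fermi sign bookkeeping, and placing the even unit in the complementary slot of $y$ and $z$ is precisely the trick that makes those signs disappear.
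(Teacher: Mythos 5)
Your proof is correct, and it takes a genuinely different route from the paper's. The paper argues via faithful states: given $0\neq c\in\ga_1\,\circled{\rm{{\tiny F}}}\,\ga_2$, it passes to the separable graded $C^*$-subalgebras generated by the components of $c$ together with their images under the gradings, picks faithful states there (which exist by separability), composes with the conditional expectations $\eps_1$ to make them even while preserving faithfulness, and then invokes Takesaki's Theorem IV.4.9~(iii) from \cite{T1} to conclude that the resulting product state is faithful on the minimal tensor product, so that $\om_1\circ\eps_1\times\om_2\circ\eps_2\,(c^*c)>0$ after extending the states to $\ga_1$ and $\ga_2$. You instead run the classical slice-functional argument for the ordinary spatial norm, adapted to the graded setting: linear independence of the $b_i$, separation of points of each factor by functionals $a\mapsto\om(c^*a)$ with $\om$ even (which rests on the faithfulness of $\eps_1$ via $\eps_1(v^*v)=\tfrac12\big(v^*v+\theta(v)^*\theta(v)\big)$ together with Lemma \ref{extension} --- the same fact the paper records at the start of Proposition \ref{extremeven}), and the observation that placing the even unit in the complementary slots of $y=c\otimes\idd$ and $z=\idd\otimes d$ makes all Fermi signs in $y^*xz$ equal to $+1$, since $\eeps(1,\cdot)=\eeps(\cdot,1)=1$; I checked this sign bookkeeping against \eqref{prstc} and it is correct, as is the GNS pairing $(\om_1\times\om_2)(y^*xz)=\langle\pi_{\om_1\times\om_2}(x)\pi_{\om_1\times\om_2}(z)\xi,\pi_{\om_1\times\om_2}(y)\xi\rangle$, which only needs Propositions \ref{converse} and \ref{GNSprod} as you say. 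What the trade-off buys: your argument is more elementary and self-contained --- it avoids separability, the existence of faithful states, and the citation to Takesaki's faithfulness theorem, and it exhibits an explicit pair of even states witnessing $\pi_{\om_1\times\om_2}(x)\neq 0$; the paper's argument is shorter on the page because it outsources the separation to the untwisted minimal tensor product, and it yields the stronger intermediate fact that a single product of faithful even states is already faithful on the subalgebra containing $c$, so one state detects $c^*c$ directly rather than through a pair of slices.
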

\begin{proof}
As the seminorm $\|\cdot\|_{\min}$ obviously satisfies the $C^*$-equality, we only need to make sure it also
separates the elements of  $\ga_1\,\circled{\rm{{\tiny F}}}\,\ga_2$.
Let $c=\sum_{i=1}^n a_i\otimes b_i $ be such an element with $c\neq 0$,
where $a_i\in\ga_1$ and $b_i\in\ga_2$, $i=1, 2, \ldots, n$.
Take now
$$
\widetilde{\ga_1}:= C^*(\{a_i, \theta_1(a_i)\mid i=1, 2, \ldots, n\})
$$
and
$$
\widetilde{\ga_2}:= C^*(\{b_i, \theta_2(b_i)\mid i=1, 2, \ldots, n\} )\,.
$$
By definition, $\widetilde{\ga_1}$ and $\widetilde{\ga_2}$ inherit
the $\bz_2$-grading from $\ga_1$ and  $\ga_2$, respectively.
Since $\widetilde{\ga_1}$ and $\widetilde{\ga_2}$ are separable
$C^*$-algebras, there exist faithful states $\widetilde{\om_1}$ and $\widetilde{\om_2}$ on
$\widetilde{\ga_1}$ and $\widetilde{\ga_2}$, respectively.
If $\widetilde{\varepsilon}_i$ is the canonical conditional expectation from
$\widetilde{\ga_i}$ onto $\widetilde{\ga}_{i,+}$, $i=1,2$,
then  $\widetilde{\om_i}\circ\widetilde{\varepsilon_i}$ are still faithful. But then by \cite[Theorem IV.4.9 (iii)]{T1}, the product state
$\psi_{\widetilde{\om_1}\circ\widetilde{\varepsilon_1},\widetilde{\om_2}\circ\widetilde{\varepsilon_2}}$
is faithful on $\widetilde{\ga_1}\otimes_{\rm min}\widetilde{\ga_2}$. Therefore,
we have
\begin{equation}
\label{faith}
\widetilde{\om_1}\circ\widetilde{\varepsilon_1}\times\widetilde{\om_2}\circ\widetilde{\varepsilon_2}\, (c^*c)=\psi_{\widetilde{\om_1}\circ\widetilde{\varepsilon_1},\widetilde{\om_2}\circ\widetilde{\varepsilon_2}}(c^* c)>0\,.
\end{equation}
The conclusion now readily
follows by considering any extensions $\om_1, \om_2$ of $\widetilde{\om_1}, \widetilde{\om_2}$ to the whole
$\ga_1$ and $\ga_2$, respectively. Indeed, from \eqref{faith}
$$
\|\pi_{\om_1\circ\eps_1\times \om_2\circ\eps_2}(c)\|^2=\om_1\circ\varepsilon_1 \times\om_2\circ\varepsilon_2\,(c^*c)>0\,.
%\om_1\circ\varepsilon_1 \times\om_2\circ\varepsilon_2\, %(c^*c)=\psi_{\widetilde{\om_1}\circ\widetilde{\varepsilon}_1,\widetilde{\om_2}\circ\widetilde{\varepsilon}_2}(c^\dag\cdot c)>0\,,
$$
%one finally sees that
%$\|\pi_{\om_1\circ\eps_1\times \om_2\circ\eps_2}(c)\|\neq 0$.
\end{proof}
Here, we present the definition of  a $C^*$-cross norm in the presence of a $\bz_2$-grading, which reduces to the usual one when the grading is trivial.
\begin{defin}
For any given $\bz_2$-graded $C^*$-algebras $(\ga_1,\th_1)$ and $(\ga_2,\th_2)$, a $C^*$-norm $\|\cdot\|_{\b}$ on $\ga_1\,\circled{\rm{{\tiny F}}}\,\ga_2$ is said to be cross if
$$
\|a_1\circled{\rm{{\tiny F}}}\,a_2\|_{\b}=\|a_1\| \|a_2\|\,, \quad \text{for homogeneous $a_1\in\ga_1\,, a_2\in\ga_2$}\,.
$$
\end{defin}
As one could expect, the norms introduced above are cross:
\begin{prop}\label{cross}
Under the above assumptions, both $\|\cdot\|_{\rm max}$ and $\|\cdot\|_{\min}$ are
cross norms on  $\ga_1\,\circled{\rm{{\tiny F}}}\,\ga_2$.
\end{prop}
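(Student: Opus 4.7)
The plan is to treat the two norms in turn, starting with $\|\cdot\|_{\min}$. Fix homogeneous $a_1\in\ga_1$ and $a_2\in\ga_2$, together with even states $\om_i\in\cs_+(\ga_i)$. Proposition \ref{GNSprod} identifies $\pi_{\om_1\times\om_2}(a_1\,\circled{\rm{{\tiny F}}}\,a_2)$ with $\pi_{\om_1}(a_1)\,\circled{\rm{{\tiny F}}}\,\pi_{\om_2}(a_2)$, so the problem reduces to computing the norm of the Fermi product of two homogeneous operators on a graded Hilbert space. Unwinding \eqref{fermiprodop}, I expect this operator to coincide with the ordinary tensor product $\bigl(\pi_{\om_1}(a_1)\,V_{\th_1,\om_1}^{k}\bigr)\otimes\pi_{\om_2}(a_2)$ acting on $\ch_{\om_1}\otimes\ch_{\om_2}$, where $k\in\{0,1\}$ is determined by the parity of $a_2$ (with the convention $V^0=\idd$). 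Unitarity of $V_{\th_1,\om_1}$ together with the standard cross property of the spatial tensor product of bounded operators then yields $\|\pi_{\om_1}(a_1)\,\circled{\rm{{\tiny F}}}\,\pi_{\om_2}(a_2)\|=\|\pi_{\om_1}(a_1)\|\,\|\pi_{\om_2}(a_2)\|$. Taking the supremum factor by factor over $\om_1$ and $\om_2$, and invoking Corollary \ref{evnorm} (which extends from extreme even states to all even states thanks to the a priori bound $\|\pi_\om(a)\|\leq\|a\|$), will then yield $\|a_1\,\circled{\rm{{\tiny F}}}\,a_2\|_{\min}=\|a_1\|\,\|a_2\|$.

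For $\|\cdot\|_{\max}$, my strategy is to introduce the two canonical embeddings $i_1:\ga_1\to\ga_1\,\circled{\rm{{\tiny F}}}\,\ga_2$ and $i_2:\ga_2\to\ga_1\,\circled{\rm{{\tiny F}}}\,\ga_2$ defined by $i_1(a)=a\,\circled{\rm{{\tiny F}}}\,\idd$ and $i_2(a)=\idd\,\circled{\rm{{\tiny F}}}\,a$. Since $\idd$ is even, all sign factors appearing in the Fermi product multiplication and involution \eqref{prstc} collapse to $+1$, and a direct check shows that both $i_1$ and $i_2$ are genuine $*$-homomorphisms at the purely algebraic level. Composing with any representation $\pi$ of $\ga_1\,\circled{\rm{{\tiny F}}}\,\ga_2$ then produces $*$-representations of the $C^*$-algebras $\ga_1$ and $\ga_2$, which are automatically contractive. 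The factorization $a_1\,\circled{\rm{{\tiny F}}}\,a_2=i_1(a_1)\,i_2(a_2)$ (again free of signs because $\idd$ is even) together with submultiplicativity gives $\|\pi(a_1\,\circled{\rm{{\tiny F}}}\,a_2)\|\leq\|a_1\|\,\|a_2\|$, and passing to the supremum over $\pi$ delivers $\|a_1\,\circled{\rm{{\tiny F}}}\,a_2\|_{\max}\leq\|a_1\|\,\|a_2\|$. The reverse inequality comes for free, since each $\pi_{\om_1\times\om_2}$ is itself a representation and hence $\|\cdot\|_{\min}\leq\|\cdot\|_{\max}$ pointwise; combined with the previous paragraph this forces equality.

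The only genuinely non-routine step is the identification of $T_1\,\circled{\rm{{\tiny F}}}\,T_2$ with a unitary twist of the standard tensor product of operators: it requires a short case analysis on the parities of $T_1$ and $T_2$ and careful bookkeeping of the sign $\varepsilon$ from \eqref{fermiprodop}. Everything downstream is standard $C^*$-algebraic manipulation, so I expect no further difficulty.
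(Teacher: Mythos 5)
Your proposal is correct and takes essentially the same route as the paper: the upper bound $\|a_1\,\circled{\rm{{\tiny F}}}\,a_2\|\leq\|a_1\|\|a_2\|$ via the sign-free unital embeddings $a\mapsto a\,\circled{\rm{{\tiny F}}}\,\idd_{\ga_2}$ and $b\mapsto \idd_{\ga_1}\circled{\rm{{\tiny F}}}\,b$, and the lower bound for $\|\cdot\|_{\min}$ via Proposition \ref{GNSprod} together with Corollary \ref{evnorm}, with $\|\cdot\|_{\min}\leq\|\cdot\|_{\rm max}$ settling the maximal case. Your explicit identification $T_1\,\circled{\rm{{\tiny F}}}\,T_2=\bigl(T_1V_{\th_1,\om_1}^{k}\bigr)\otimes T_2$, with $k$ given by the parity of the homogeneous second factor, is accurate and merely spells out the sign bookkeeping that the paper compresses into ``a straightforward application of Proposition \ref{GNSprod}''.
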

\begin{proof}
We start by observing that if $\|\cdot\|_\beta$ is any $C^*$-norm on the Fermi
product $\ga_1\,\circled{\rm{{\tiny F}}}\,\ga_2$, then
$\|a\otimes b\|_\beta\leq \|a\|\|b\|$ for any $a\in\ga_1$, $b\in\ga_2$. Indeed, %for
%$a\in\mathcal{A}$ and $b\in\mathcal{B}$,
one has
$$
\|a\otimes b\|_\beta=\|(a\otimes \idd_{\ga_2})(\idd_{\ga_1}\otimes b)\|_\beta\leq \|a\otimes \idd_{\ga_2}\|_\beta \|\idd_{\ga_1}\otimes b\|_\beta=\|a\|\|b\|
$$
because both
$$
\ga_1\ni a\mapsto a\otimes \idd_{\ga_2}\in\ga_1\,\circled{\rm{{\tiny F}}}\,\ga_2\,, \quad \ga_2\ni b\mapsto \idd_{\ga_1}\otimes b\in\ga_1\,\circled{\rm{{\tiny F}}}\,\ga_2
$$
are injective $^*$-homomorphisms.\\
Since $\|a\otimes b\|_{\rm max}\geq \|a\otimes b\|_{\min}$, for any $a\in\ga_1$ and $b\in\ga_2$,
it is enough to prove that
$\|a\otimes b\|_{\rm min}\geq \|a\|\|b\|$ for homogeneous $a,b$.
%To this end, let $\om$ be an even state on $\mathcal{A}$ and
%$\varphi$ any state on $\mathcal{B}$.
To this aim, note that a straightforward application of Proposition  \ref{GNSprod}
gives $\|a\otimes b\|_{\min}\geq \|\pi_{\om_1}(a)\|\,\|\pi_{\om_2}(b)\|$ for any
even states $\om_i$ on $\cs_+(\ga_i)$, $i=1,2$. The thesis then follows from Corollary \ref{evnorm}.
%Now observe that the set of all even states in $\mathcal{A}$ separates its elements because
%the conditional expectation from $\mathcal{A}$ onto $\mathcal{A}_+$ is faithful.
%But then
%$$\sup\{\|\pi_\om(a)\|: \om\in\mathcal{S}(\mathcal{A})_+\}=\|a\|$$
%because the direct sum representation
%$\oplus_{\om\in\mathcal{S}(\mathcal{A})_+ } \pi_\om$ is faithful and thus isometric. This ends the proof.
\end{proof}

Henceforth the completion of $\ga_1\,\circled{\rm{{\tiny F}}}\,\ga_2$ with respect to the spatial norm
$\|\cdot\|_{\min}$
will always be denoted by $\ga_1\,\circled{\rm{{\tiny F}}}_{\min}\,\ga_2$, whereas the completion
with respect to the maximal norm $\|\cdot\|_{\rm max}$ will be denoted by  $\ga_1\,\circled{\rm{{\tiny F}}}_{\rm max}\,\ga_2$.
%
%
%3. With $A$ abelian, there is only one $C^*$-norm on $\mathcal{A}\,\circled{\rm{{\tiny F}}}\,\mathcal{B}$.
We next show that the tensor product of faithful GNS representations of even states is still a faithful representation on the $C^*$-algebra $\ga_1\,\circled{\rm{{\tiny F}}}_{\min}\,\ga_2$. 
To this aim, we first need a technical lemma on normal states.
We say that a state $\om$ of a given $C^*$-algebra $\ga$ is normal in a representation 
$\pi:\ga\rightarrow\cb(\ch)$ if $\om(a)={\rm Tr}(\pi(a)T),\,\,a\in\ga$, for 
some positive trace-class operator $T$ with ${\rm Tr}(T)=1$.

\begin{lem}
\label{tracialstates}
Let us take $i=1,2$, and $\om_i\in\mathcal{S}_+(\ga_i)$. Suppose  for each $i=1, 2$ we are given
a normal  state in $\pi_{\om_i}$, say
$\varphi_i(\cdot):={\rm Tr} (\pi_{\om_i}(\cdot)T_i)$, where $T_i\in\cb(\ch_{\om_i})$ is a positive trace-class operator with unit trace. Then the product state
$\varphi_1\times\varphi_2$ on $\ga_1\,\circled{\rm{{\tiny F}}}\,\ga_2$
is a normal state in $\pi_{\om_1\times\om_2}$. More precisely, one has
$$
\varphi_1\times\varphi_2(\cdot)= {\rm Tr}(\pi_{\om_1}\,\circled{\rm{{\tiny F}}}\,\pi_{\om_2}(\cdot)\,\, T_1\,\circled{\rm{{\tiny F}}}\,T_2)\,.
$$
\end{lem}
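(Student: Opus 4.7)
The plan is to invoke Proposition \ref{GNSprod} to identify $\pi_{\om_1\times\om_2}$, up to unitary equivalence, with $\pi_{\om_1}\,\circled{\rm{{\tiny F}}}\,\pi_{\om_2}$ acting on $\ch_{\om_1}\,\circled{\rm{{\tiny F}}}\,\ch_{\om_2}$, and then verify directly on that graded Hilbert space that $T_1\,\circled{\rm{{\tiny F}}}\,T_2$ is a positive trace-class operator of unit trace whose normal functional, pulled back through $\pi_{\om_1}\,\circled{\rm{{\tiny F}}}\,\pi_{\om_2}$, reproduces $\varphi_1\times\varphi_2$.

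The decisive technical step I would isolate is the trace identity
\[\tr(X\,\circled{\rm{{\tiny F}}}\,Y)=\tr(X)\tr(Y)\]
for trace-class $X$ on $\ch_{\om_1}$ and $Y$ on $\ch_{\om_2}$. The proof splits into two homogeneous subcases: when $Y$ is even the sign factor $\varepsilon(Y,\cdot)$ in \eqref{fermiprodop} is identically $+1$, so $X\,\circled{\rm{{\tiny F}}}\,Y$ coincides with the ordinary tensor product $X\otimes Y$ and the classical multiplicativity of the trace applies; when $Y$ is odd, $Y$ interchanges $\ch_{\om_2,+}$ and $\ch_{\om_2,-}$, so $\tr(Y)=0$, and evaluating $\tr(X\,\circled{\rm{{\tiny F}}}\,Y)$ on a homogeneous basis shows it also vanishes. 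Decomposing $X, Y$ into even and odd parts then yields the general identity, and in particular $\tr(T_1\,\circled{\rm{{\tiny F}}}\,T_2)=1$. For positivity, the very conclusion forces $\varphi_1\times\varphi_2$ to be a state, hence by Proposition \ref{converse} at least one $\varphi_i$ must be even; replacing the corresponding $T_i$ with its even part $(T_i+V_{\theta_i,\om_i}T_iV_{\theta_i,\om_i})/2$ does not alter $\varphi_i$, and with one factor even a routine block-diagonal computation along the grading of $\ch_{\om_i}$ exhibits $T_1\,\circled{\rm{{\tiny F}}}\,T_2$ as a positive operator.

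Finally, matching the two functionals reduces by linearity to checking equality on a homogeneous elementary tensor $a_1\,\circled{\rm{{\tiny F}}}\,a_2$. With $A_i:=\pi_{\om_i}(a_i)$ and $T_i=T_i^++T_i^-$, the graded multiplication rule \eqref{prstc} gives
\[(A_1\,\circled{\rm{{\tiny F}}}\,A_2)(T_1\,\circled{\rm{{\tiny F}}}\,T_2)=\sum_{\sigma,\tau=\pm}\varepsilon(A_2,T_1^\sigma)\,(A_1T_1^\sigma)\,\circled{\rm{{\tiny F}}}\,(A_2T_2^\tau),\]
and the trace identity kills every summand except the one with $\sigma=\partial(A_1)$ and $\tau=\partial(A_2)$ (the others have an odd factor). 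On that surviving summand the residual sign equals $+1$ unless both $a_1$ and $a_2$ are odd, a case in which the evenness of one among the $\varphi_i$ forces $\varphi_1(a_1)\varphi_2(a_2)=0$ and the identity $0=0$ is automatic. In every other case the outcome is $\tr(A_1T_1)\tr(A_2T_2)=\varphi_1\times\varphi_2(a_1\,\circled{\rm{{\tiny F}}}\,a_2)$, as desired. The hardest part throughout will be the sign bookkeeping forced by the graded product, and the decisive simplification is the vanishing of the trace of every odd trace-class operator on a graded Hilbert space.
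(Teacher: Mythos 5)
Your proof is correct, and its overall strategy matches the paper's: identify $\pi_{\om_1\times\om_2}$ with $\pi_{\om_1}\,\circled{\rm{{\tiny F}}}\,\pi_{\om_2}$ via Proposition \ref{GNSprod}, reduce to even density operators, and let the vanishing of the trace of odd operators absorb all the signs. The execution, however, differs in two instructive ways. Where you isolate the identity $\tr(X\,\circled{\rm{{\tiny F}}}\,Y)=\tr(X)\tr(Y)$ and then expand $(A_1\,\circled{\rm{{\tiny F}}}\,A_2)(T_1\,\circled{\rm{{\tiny F}}}\,T_2)$ by the graded product rule over the parity components $T_i^\pm$, the paper evenizes \emph{both} $T_1$ and $T_2$ at the outset, picks an orthonormal basis of homogeneous eigenvectors of each $T_i$, and computes the trace of $\big(\pi_{\om_1}\,\circled{\rm{{\tiny F}}}\,\pi_{\om_2}(a_1\otimes a_2)\big)\,T_1\,\circled{\rm{{\tiny F}}}\,T_2$ by summing diagonal matrix elements over the four homogeneous blocks of the product basis, the sign $\varepsilon(a_2,e_i)$ being harmless because the diagonal terms survive only when both $a_i$ are even. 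Your route is in one respect more careful: the paper's reduction ``WLOG each $T_i$ is even'' rests on the claim $\tr\big(\pi_{\om_i}(a)T_i'\big)=\varphi_i(a)$ for $T_i'=\tfrac12(T_i+V_{\th_i,\om_i}T_iV_{\th_i,\om_i})$, whereas this trace actually equals $\tfrac12\big(\varphi_i(a)+\varphi_i(\th_i(a))\big)$, so the replacement is legitimate only when $\varphi_i$ itself is even; you instead invoke Proposition \ref{converse} to conclude that at least one marginal is even, evenize only the corresponding $T_i$, and dispose of the other factor through its parity decomposition, which genuinely covers the mixed case that the lemma's wording allows. Two points worth tightening in your write-up: in the odd case of your trace identity, first record that $X\,\circled{\rm{{\tiny F}}}\,Y$ is trace class before summing along a basis --- immediate from the identification $X\,\circled{\rm{{\tiny F}}}\,Y=(XU_1)\otimes Y$, with $U_1$ the grading unitary of the first space, which also re-proves the vanishing since $\tr(Y)=0$; and in the ``both $a_i$ odd'' case you should say explicitly that the surviving summand $-\tr(A_1T_1^-)\tr(A_2T_2^-)$ is itself zero because the evenized factor satisfies $T_1^-=0$ (assuming $\varphi_1$ is the even marginal), so that the identity reads $0=0$ on both sides rather than by appeal to the right-hand side alone.
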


\begin{proof}
We first point out that no loss of generality occurs if we suppose  that each $T_i$ is even for $i=1, 2$. Indeed, if $T_i$ is not even, and $(\ch_{\om_1}, \pi_{\om_i}, \xi_{\om_i}, V_{\th_i,\om_i})$ is the GNS covariant representation of $\om_i$, then it is enough
to pass to $T_i':= \frac{1}{2}(T_i+V_{\th_i,\om_i}TV_{\th_i,\om_i})$, since one can check that $\f_i(\cdot)={\rm Tr}(\pi_{\om_i}(\cdot)\, T_i')$. \\
With homogeneous $a_i\in\ga_i$ and homogeneous vectors $\xi_i\in\ch_{\om_i}$, by using 
\eqref{prstc} and \eqref{fermiprodop} one can easily see that
\begin{equation}
\label{prodf}
\big(\pi_{\om_1}\,\circled{\rm{{\tiny F}}}\,\pi_{\om_2}(a_1\otimes a_2)\big)\,\, T_1\,\circled{\rm{{\tiny F}}}\,T_2\,
(\xi_1\otimes\xi_2)=\varepsilon(a_2, T_1\xi_1)\pi_{\om_1}(a_1)T_1\xi_1\otimes\pi_{\om_2}(a_2)T_2\xi_2
\end{equation}
In order to compute the trace of $A:=\big(\pi_{\om_1}\,\circled{\rm{{\tiny F}}}\,\pi_{\om_2}(a_1\otimes a_2)\big)\,\, T_1\,\circled{\rm{{\tiny F}}}\,T_2$, we need to single out a convenient orthonormal basis
of the Hilbert space $\ch_{\om_1}\circled{\rm{{\tiny F}}}\,\ch_{\om_2}$. To this end, note that
$T_1$ and $T_2$ preserve the homogeneous subspaces of $\ch_{\om_1}$ and
$\ch_{\om_2}$, respectively.
Therefore, one can exhibit a orthonormal basis of $\ch_{\om_i}$
made up of homogeneous eigenvectors of $T_i$, $i=1, 2$.
If we take $I,J,L,K$ as set of indices, we denote by $\{e_i\}_{i\in I}\cup\{e'_j\}_{j\in J}$ the basis of the eigenvectors of $T_1$, where the $e_i$'s are even and the
$e'_j$'s are odd. Likewise, we denote by $\{f_l\}_{l\in L}\cup\{f'_k\}_{k\in K}$ the basis of the eigenvectors of $T_2$, where the $f_l$'s are even and the
$f'_k$'s are odd. As is easily verified, the set
$$\{e_i\otimes f_l\}_{i\in I, l\in L}\cup\{e_i\otimes f'_k\}_{ i\in I, k\in K}\cup\{e'_j\otimes f_l\}_{j\in J, l\in L}\cup\{e'_j\otimes f'_k\}_{ j\in J, k\in K}$$
 is an orthonormal basis of the Fermi
Hilbert space $\ch_{\om_1}\,\circled{\rm{{\tiny F}}}\, \ch_{\om_2}$. Consequently, by \eqref{prodf} and taking into account the orthogonality relations one has:\\
\begin{align*}
Tr (A)= &\sum_{i\in I, l\in L} \langle A\,e_i\otimes f_l, e_i\otimes f_l\rangle\\
&+\sum_{i\in I, k\in K} \langle A\,e_i\otimes f'_k, e_i\otimes f'_k\rangle\\
&+ \sum_{j\in J, l\in L} \langle A\,e'_j\otimes f_l, e'_j\otimes f_l\rangle\\
&+\sum_{j\in J, k\in K} \langle A\,e'_j\otimes f'_k, e'_j\otimes f'_k\rangle\\
=&\sum_{i\in I, l\in L}\langle \pi_{\om_1}(a_1)T_1e_i, e_i \rangle \langle \pi_{\om_2}(a_2)T_2f_l, f_l \rangle \\
&+\sum_{i\in I, k\in K}\langle \pi_{\om_1}(a_1)T_1e_i, e_i \rangle \langle \pi_{\om_2}(a_2)T_2f'_k, f'_k \rangle \\
&+\sum_{j\in J, l\in L}\langle \pi_{\om_1}(a_1)T_1e'_j, e'_j \rangle \langle \pi_{\om_2}(a_2)T_2f_l, f_l \rangle \\
&+\sum_{j\in J, k\in K}\langle \pi_{\om_1}(a_1)T_1e'_j, e'_j \rangle \langle \pi_{\om_2}(a_2)T_2f'_k, f'_k \rangle \\
=& Tr(\pi_{\om_1}(a_1)T_1)Tr(\pi_{\om_2}(a_2)T_2)\\
=& \varphi_1\times\varphi_2(a_1\otimes a_2)\,.
\end{align*}
\end{proof}

\begin{prop}
Let us take $i=1,2$, $(\ga_i,\th_i)$ a $\bz_2$-graded $C^*$-algebra, and $\om_i\in\cs_+(\ga_i)$. If $\pi_{\om_i}$ is a faithful representation, then $\pi_{\om_1}\,\circled{\rm{{\tiny F}}}\,\pi_{\om_2}$ is a faithful representation of $\ga_1 \,\circled{\rm{{\tiny F}}}_{\min}\,\ga_2$
\end{prop}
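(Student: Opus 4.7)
The plan is to show that, for any pair of even states $\varphi_i\in\cs_+(\ga_i)$ ($i=1,2$), the product state $\varphi_1\times\varphi_2$ factors through the representation $\pi_{\om_1}\,\circled{\rm{{\tiny F}}}\,\pi_{\om_2}$, viewed as acting on $\ga_1\,\circled{\rm{{\tiny F}}}_{\min}\,\ga_2$. Granting this, by Proposition \ref{GNSprod} the GNS representation $\pi_{\varphi_1\times\varphi_2}=\pi_{\varphi_1}\,\circled{\rm{{\tiny F}}}\,\pi_{\varphi_2}$ arises as the composition of $\pi_{\om_1}\,\circled{\rm{{\tiny F}}}\,\pi_{\om_2}$ with a $*$-homomorphism from its image, which yields the bound $\|\pi_{\varphi_1\times\varphi_2}(x)\|\leq\|(\pi_{\om_1}\,\circled{\rm{{\tiny F}}}\,\pi_{\om_2})(x)\|$ for every $x$. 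Taking the supremum over $\varphi_i\in\cs_+(\ga_i)$ then gives $\|x\|_{\min}\leq\|(\pi_{\om_1}\,\circled{\rm{{\tiny F}}}\,\pi_{\om_2})(x)\|$, whereas the reverse inequality is immediate from the definition of the spatial norm. The resulting isometry is exactly the faithfulness claim.

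For the factorization I would first approximate each even state $\varphi_i$ by \emph{even} normal states in $\pi_{\om_i}$. Since $\pi_{\om_i}$ is faithful, a standard Hahn--Banach argument yields that convex combinations of vector states $\xi\mapsto\langle\pi_{\om_i}(\cdot)\xi,\xi\rangle$ are weak*-dense in $\cs(\ga_i)$, and each such combination takes the form $\mathrm{Tr}(\pi_{\om_i}(\cdot)T_i)$ for a positive finite-rank operator $T_i$ of unit trace. The symmetrization $\varphi\mapsto\tfrac{1}{2}(\varphi+\varphi\circ\theta_i)$ is weak*-continuous, fixes $\varphi_i$, and corresponds on the operator side to the averaging $T_i\mapsto \tfrac{1}{2}(T_i+V_{\theta_i,\om_i}T_iV_{\theta_i,\om_i})$ already appearing at the start of the proof of Lemma \ref{tracialstates}. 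Applying it yields an approximating net $\psi_i^{(\alpha_i)}(\cdot)=\mathrm{Tr}(\pi_{\om_i}(\cdot)T_i^{(\alpha_i)})$ of even normal states with $T_i^{(\alpha_i)}$ even, still converging weak* to $\varphi_i$.

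By Lemma \ref{tracialstates} the product functionals on the algebraic Fermi product satisfy
$$\psi_1^{(\alpha_1)}\times\psi_2^{(\alpha_2)}(\cdot)=\mathrm{Tr}\bigl((\pi_{\om_1}\,\circled{\rm{{\tiny F}}}\,\pi_{\om_2})(\cdot)\,T_1^{(\alpha_1)}\,\circled{\rm{{\tiny F}}}\,T_2^{(\alpha_2)}\bigr),$$
and hence factor through $\pi_{\om_1}\,\circled{\rm{{\tiny F}}}\,\pi_{\om_2}$; in particular they vanish on $\ker(\pi_{\om_1}\,\circled{\rm{{\tiny F}}}\,\pi_{\om_2})$. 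On the algebraic Fermi product one has pointwise convergence $\psi_1^{(\alpha_1)}\times\psi_2^{(\alpha_2)}(a\otimes b)=\psi_1^{(\alpha_1)}(a)\psi_2^{(\alpha_2)}(b)\to\varphi_1(a)\varphi_2(b)=(\varphi_1\times\varphi_2)(a\otimes b)$ along the product net; extending by linearity, and using that all functionals involved are states (hence uniformly bounded in norm), this convergence passes to the whole of $\ga_1\,\circled{\rm{{\tiny F}}}_{\min}\,\ga_2$. Consequently $\varphi_1\times\varphi_2$ also vanishes on $\ker(\pi_{\om_1}\,\circled{\rm{{\tiny F}}}\,\pi_{\om_2})$, giving the desired factorization.

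The principal difficulty is isolating approximating normal states that are \emph{even}, so that the hypothesis of Lemma \ref{tracialstates} is met; the symmetrization trick circumvents this, after which the argument becomes a routine weak*-approximation powered by the lemma.
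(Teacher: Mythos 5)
Your proof is correct, and its core coincides with the paper's: both arguments reduce faithfulness to the inequality $\|x\|_{\min}\leq\|(\pi_{\om_1}\,\circled{\rm{{\tiny F}}}\,\pi_{\om_2})(x)\|$ (the reverse inequality being immediate from the definition of $\|\cdot\|_{\min}$), and both obtain it by weak*-approximating each even state $\varphi_i$ by states normal in the faithful representation $\pi_{\om_i}$ and feeding the approximants into Lemma \ref{tracialstates}. Where you genuinely diverge is the endgame. The paper stays quantitative: by cyclicity it expresses $\|\pi_{\varphi_1\times\varphi_2}(x)\|$ as a supremum of ratios $\varphi_1\times\varphi_2(y^*x^*xy)^{1/2}/\varphi_1\times\varphi_2(y^*y)^{1/2}$ and runs an explicit $\varepsilon$-estimate; since the approximating functionals are only ever evaluated at the algebraic elements $\widetilde{y}^*x^*x\widetilde{y}$ and $\widetilde{y}^*\widetilde{y}$, no extension of the convergence beyond $\ga_1\,\circled{\rm{{\tiny F}}}\,\ga_2$ is needed. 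You argue softly instead: the trace functionals annihilate $\ker(\pi_{\om_1}\,\circled{\rm{{\tiny F}}}\,\pi_{\om_2})$ inside $\ga_1\,\circled{\rm{{\tiny F}}}_{\min}\,\ga_2$, the uniform bound carries the pointwise convergence to the completion, and so $\varphi_1\times\varphi_2$ vanishes on the kernel and factors through the quotient $C^*$-algebra, giving $\|\pi_{\varphi_1\times\varphi_2}(x)\|\leq\|(\pi_{\om_1}\,\circled{\rm{{\tiny F}}}\,\pi_{\om_2})(x)\|$ with no estimates at all. One point in your version deserves to be made explicit: the uniform bound you invoke is not just that the approximants are states, but that products of even states are contractive for $\|\cdot\|_{\min}$ — which does hold, by Proposition \ref{GNSprod} together with the very definition of $\|\cdot\|_{\min}$, but should be said. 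The trade-off between the two routes is clear: yours buys a cleaner, estimate-free conclusion at the cost of the kernel/quotient machinery and the extension-of-convergence step; the paper's avoids both at the cost of the sup formula and some $\varepsilon$-bookkeeping.

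A further remark in your favour: your insistence on first symmetrizing to \emph{even} normal states with even density operators is more careful than the source. The paper's Lemma \ref{tracialstates} disposes of non-even $T_i$ by passing to $\frac{1}{2}(T_i+V_{\th_i,\om_i}T_iV_{\th_i,\om_i})$, a replacement that represents the same functional only when that functional is itself even, whereas the normal states approximating $\varphi_i$ in the paper's proof of this proposition need not be even. Your averaging of the approximating net — which fixes the limit $\varphi_i$ precisely because $\varphi_i$ is even and the symmetrization is weak*-continuous — patches exactly this point.
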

\begin{proof}
First observe that $\pi_{\om_1}\,\circled{\rm{{\tiny F}}}\,\pi_{\om_2}$ can be extended to
a representation of $\ga_1 \,\circled{\rm{{\tiny F}}}_{\min}\,\ga_2$. Indeed, by definition of
$\|\cdot\|_{\min}$ one has $\|\pi_{\om_1}\,\circled{\rm{{\tiny F}}}\,\pi_{\om_2}\, (x)\|\leq \|x\|_{\min}$
for every $x\in\ga_1 \,\circled{\rm{{\tiny F}}}\,\ga_2$.\\
Therefore,  by density we  need only prove  that $\|x\|_{\min}\leq\|\pi_{\om_1}\,\circled{\rm{{\tiny F}}}\,\pi_{\om_2}\, (x)\|$ for every $x\in\ga_1 \,\circled{\rm{{\tiny F}}}\,\ga_2$.

%To this end, we will show
%that for any fixed   $x\in\mathcal{A}_1 \,\circled{\rm{{\tiny F}}}_s\,\mathcal{A}_2$ one has $\|x\|_s\leq \|\pi_{\om_1}\,\circled{\rm{{\tiny F}}}\,\pi_{\om_2}\, (x)\|$.\\
Let $\varphi_i$ be even state on $\ga_i$, $i=1, 2$. By cyclicity we have
\begin{eqnarray*}
&& \|\pi_{\varphi_1\times\varphi_2}\,(x)\|=\sup_{y:\,\varphi_1\times\varphi_2\,(y^*y)\neq 0}\frac{ \varphi_1\times\varphi_2\,( y^*x^*xy)^\frac{1}{2}}{\varphi_1\times\varphi_2\,(y^*y)^\frac{1}{2}}\,.
\end{eqnarray*}
Since $\pi_{\om_i}$ is faithful, $\varphi_i$ is a weak* limit of a net of states that are normal in the representation
$\pi_{\om_i}$, for $i=1, 2$, {\it cf.} \cite[Theorem 4.9 (iii)]{T1}. In other terms, there exist two nets $\{T_k\}_{k\in K}\subset\mathcal{B}(\ch_{\om_1})$ and $\{S_k \}_{k\in K}\subset\mathcal{B}(\ch_{\om_2})$, of normalized positive trace-class operators such that for every $a_i\in\ga_i$ one has
$$
\varphi_1(a_1)=\lim_k Tr (\pi_{\om_1}(a_1)T_k)\,, \quad \varphi_2(a_2)=\lim_k Tr(\pi_{\om_2}(a_2)S_k)\,.
$$
By virtue of Lemma \ref{tracialstates}, the product state $\varphi_1\times\varphi_2$ is seen at once
to be the weak* limit of the net $\{\eta_k\}_{k\in K}$ with
$$
\eta_k(x):=Tr\big(\pi_{\om_1}\,\circled{\rm{{\tiny F}}}\,\pi_{\om_2}\,(x) T_k\circled{\rm{{\tiny F}}}\, S_k\big)\,, \quad x\in\ga_1 \,\circled{\rm{{\tiny F}}}\,\ga_2\,.
$$
Now, fix $\varepsilon>0$ and let $\widetilde{y}\in\ga_1 \,\circled{\rm{{\tiny F}}}\,\ga_2$ with
$\varphi_1\times\varphi_2\, (\widetilde{y}^*\widetilde{y})\neq 0$ be such that:
$$
\sup_{y:\, \varphi_1\times\varphi_2(y^*y)\neq 0}\frac{ \varphi_1\times\varphi_2( y^*x^*xy)^\frac{1}{2}}{\varphi_1\times\varphi_2(y^*y)^\frac{1}{2}}\leq \frac{ \varphi_1\times\varphi_2( \widetilde{y}^*x^*x\widetilde{y})^\frac{1}{2}}{\varphi_1\times\varphi_2(\widetilde{y}^*\widetilde{y})^\frac{1}{2}}+\frac{\varepsilon}{2}.
$$
Let $k_o\in K$ be such that
$$\frac{ \varphi_1\times\varphi_2( \widetilde{y}^*x^*x\widetilde{y})^\frac{1}{2}}{\varphi_1\times\varphi_2(\widetilde{y}^*\widetilde{y})^\frac{1}{2}}\leq \frac{\eta_{k_o}( \widetilde{y}^*x^*x\widetilde{y})^\frac{1}{2}}{\eta_{k_o}(\widetilde{y}^*\widetilde{y})^\frac{1}{2}}+\frac{\varepsilon}{2}.
$$
We have
\begin{align*}
\sup_{y:\, \varphi_1\times\varphi_2(y^*y)\neq 0}\frac{ \varphi_1\times\varphi_2( y^*x^*xy)^\frac{1}{2}}{\varphi_1\times\varphi_2(y^*y)^\frac{1}{2}}&\leq \frac{\eta_{k_o}( \widetilde{y}^*x^*x\widetilde{y})^\frac{1}{2}}{\eta_{k_o}(\widetilde{y}^*\widetilde{y})^\frac{1}{2}}+\varepsilon\\
&\leq \|\pi_{\om_1}\,\circled{\rm{{\tiny F}}}\,\pi_{\om_2}\,(x) \| +\varepsilon\,.
\end{align*}
Since $\varepsilon>0$ is arbitrary, we find that
$$
\sup_{y:\, \varphi_1\times\varphi_2(y^*y)\neq 0}\frac{ \varphi_1\times\varphi_2( y^*x^*xy)^\frac{1}{2}}{\varphi_1\times\varphi_2(y^*y)^\frac{1}{2}}\leq \|\pi_{\om_1}\,\circled{\rm{{\tiny F}}}\,\pi_{\om_2}\,(x) \|
$$
for any pair of even states $\varphi_i$, $i=1, 2$. Therefore, we finally see that
 $\|x\|_{\min}\leq \|\pi_{\om_1}\,\circled{\rm{{\tiny F}}}\,\pi_{\om_2}\,(x)\|$, and the proof
is complete.
\end{proof}

In the next proposition, we will see that for both norms introduced, the grading on $\ga_1 \circled{\rm{{\tiny F}}} \ga_2$  extends to the $C^*$-completion. Whenever this happens, we call the norm compatible.
\begin{defin}
A $C^*$-norm $\|\cdot\|_\b$ on $\ga_1\,\circled{\rm{{\tiny F}}}\,\ga_2$ is said to be \emph{compatible} if the natural
grading $\theta_1\,\circled{\rm{{\tiny F}}}\,\theta_2$ extends to a (necessarily involutive)
$*$-automorphism of the completion $\ga_1\,\circled{\rm{{\tiny F}}}_\beta\,\ga_2$.
\end{defin}

\begin{prop}
The maximal norm $\|\cdot\|_{\rm max}$ and the spatial norm
$\|\cdot\|_{\min}$ are compatible.
\end{prop}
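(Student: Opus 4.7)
My plan is to verify that both norms are invariant under the grading automorphism $\theta:=\theta_1\,\circled{\rm{{\tiny F}}}\,\theta_2$ (which is already a $*$-automorphism of the algebraic Fermi product), since any isometric $*$-automorphism of a dense $*$-subalgebra extends uniquely to the $C^*$-completion, and the extension remains an involutive $*$-automorphism because the defining relations $\theta^2=\id$ and $\theta(xy)=\theta(x)\theta(y)$ pass to limits. So it all boils down to checking $\|\theta(x)\|_\beta=\|x\|_\beta$ for $\beta\in\{\max,\min\}$.

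For the maximal norm, I would simply note that if $\pi$ is any $*$-representation of $\ga_1\,\circled{\rm{{\tiny F}}}\,\ga_2$, then so is $\pi\circ\theta$, because $\theta$ is itself a $*$-automorphism of the algebraic Fermi product. Hence
\begin{equation*}
\|\theta(x)\|_{\max}=\sup_\pi \|\pi(\theta(x))\|=\sup_\pi \|(\pi\circ\theta)(x)\|=\|x\|_{\max},
\end{equation*}
since $\pi\mapsto\pi\circ\theta$ is a bijection of the class of representations onto itself.

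For the spatial norm, I would use Proposition \ref{converse}: if $\omega_1\in\cs_+(\ga_1)$ and $\omega_2\in\cs_+(\ga_2)$, then $\omega_1\times\omega_2$ is an even state on $\ga_1\,\circled{\rm{{\tiny F}}}\,\ga_2$. Consequently its GNS representation is covariant, that is, there exists a self-adjoint unitary $V$ on $\ch_{\omega_1\times\omega_2}$ such that $\pi_{\omega_1\times\omega_2}(\theta(y))=V\pi_{\omega_1\times\omega_2}(y)V$ for every $y$ in the Fermi product. Implementation by a unitary preserves the operator norm, so
\begin{equation*}
\|\pi_{\omega_1\times\omega_2}(\theta(x))\|=\|\pi_{\omega_1\times\omega_2}(x)\|,
\end{equation*}
and taking the supremum over all $(\omega_1,\omega_2)\in\cs_+(\ga_1)\times\cs_+(\ga_2)$ yields $\|\theta(x)\|_{\min}=\|x\|_{\min}$.

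Once isometry is established, extending $\theta$ by continuity to $\ga_1\,\circled{\rm{{\tiny F}}}_\beta\,\ga_2$ is routine, and the extended map is automatically an involutive $*$-automorphism since these algebraic identities hold on the dense subalgebra and both sides of each identity are continuous in the $\|\cdot\|_\beta$-norm. There is no serious obstacle here; the only point that needs Proposition \ref{converse} (the nontrivial ingredient) is the positivity of $\omega_1\times\omega_2$ for even marginals, which is precisely what lets us invoke the covariant GNS construction in the spatial case.
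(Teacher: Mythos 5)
Your proof is correct and follows essentially the same route as the paper: for the maximal norm, the bijection $\pi\mapsto\pi\circ\theta$ on representations, and for the spatial norm, unitary implementation of $\theta$ in each GNS representation $\pi_{\om_1\times\om_2}$, followed by the routine continuous extension. The only cosmetic difference is that the paper identifies the implementing unitary explicitly as $V_{\theta_1,\om_1}\,\circled{\rm{{\tiny F}}}\,V_{\theta_2,\om_2}$ via Proposition \ref{GNSprod}, whereas you obtain an abstract implementing unitary from the covariant GNS construction for the even state $\om_1\times\om_2$ (evenness coming from Proposition \ref{converse}); both are equally valid.
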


\begin{proof}
%Let $(\ca_i, \theta_i)$ be graded $C^*$-algebras.
We have to show that $\theta:=\theta_1\,\circled{\rm{{\tiny F}}}\, \theta_2$
can be extended to a $*$-automorphism of both $\ga_1\,\circled{\rm{{\tiny F}}}_{\rm max}\,\ga_2$
and $\ga_1\,\circled{\rm{{\tiny F}}}_{\min}\,\ga_2$.

As for the maximal norm, for $x\in\ga_1\,\circled{\rm{{\tiny F}}}\,\ga_2$ we have
\begin{align*}
\|\theta(x)\|_{\rm max}&=\sup\{\|\pi(\theta(x))\|: \pi\,\textrm{ is a}\, *\textrm{-representation of}\,\ga_1\,\circled{\rm{{\tiny F}}}\,\ga_2 \}\\
&=\sup\{\|\pi'(x) \|: \pi'\,\textrm{ is a}\, *\textrm{-representation of}\,\ga_1\,\circled{\rm{{\tiny F}}}\,\ga_2 \}\\
&=\|x\|_{\rm max}\,,
\end{align*}
since any representation $\pi'$ of $\ga_1\,\circled{\rm{{\tiny F}}}\,\ga_2$ can be written as  $\pi\circ\theta$.

As for the spatial norm,  we start by recalling that for any pair of even states
$\om_i\in\cs_+(\ga_i)$, $i=1, 2$, the $*$-automorphism $\theta=\theta_1\,\circled{\rm{{\tiny F}}}\,\theta_2$
is unitarily implemented in the representation $\pi_{\om_1\times\om_2}$. More precisely, in light of
Proposition \ref{GNSprod} one has
$$\pi_{\om_1\times\om_2}(\theta(x))= V_{\theta_1, \om_1}\,\circled{\rm{{\tiny F}}}\, V_{\theta_2, \om_2} \left(\pi_{\om_1\times\om_2} (x)\right)   V_{\theta_1, \om_1}\,\circled{\rm{{\tiny F}}}\, V_{\theta_2, \om_2}, \, \, x\in\ga_1\,\circled{\rm{{\tiny F}}}\,\ga_2 $$
where $V_{\theta_i, \om_i}\in \cu(\ch_{\om_i})$ is the self-adjoint unitary that implements $\theta_i$, $i=1, 2$.
But then for $x\in\ga_1\,\circled{\rm{{\tiny F}}}\,\ga_2$ we have
\begin{align*}
&\,\,\,\,\,\,\,\|\theta(x)\|_{\min}\\
&=\sup\{ \|\pi_{\om_1\times\om_2}(\theta(x))\|: \om_i\in\cs_+(\ga_i),\, i=1, 2 \}\\
&=\sup \{\| V_{\theta_1, \om_1}\,\circled{\rm{{\tiny F}}}\, V_{\theta_2, \om_2} \left(\pi_{\om_1\times\om_2} (x)\right)   V_{\theta_1, \om_1}\,\circled{\rm{{\tiny F}}}\, V_{\theta_2, \om_2}\|: \om_i\in\cs_+(\ga_i),\, i=1, 2 \}\\
&=\sup\{  \| \pi_{\om_1\times\om_2} (x) \|:  \om_i\in\cs_+(\ga_i),\, i=1, 2  \}\\
&= \|x\|_{\min}
\end{align*}
and the proof is complete.
\end{proof}

Recall that, for a trivial grading, any abelian $C^*$-algebra is nuclear, namely there is just one way to complete its algebraic tensor product with another $C^*$-algebra. This still holds for nontrivial $\bz_2$-gradings. For the proof, we first consider a couple of technical results.
\begin{lem}
\label{noseparation}
For a (locally) compact Hausdorff space $X$, let $(C(X),\th)$ be a $\bz_2$-graded $C^*$-algebra with non-trivial grading. If $K$ is a proper closed subset of
$\mathcal{E}(\mathcal{S}_+(C(X)))$, then there exists a non-null positive
$f\in C(X)$ such that $\om(f)=0$ for every $\om\in K$.
\end{lem}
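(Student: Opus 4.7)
The plan is to reduce the statement to a classical application of Urysohn's lemma by passing to the Gelfand-spectrum picture. By Gelfand duality the grading $\theta$ is implemented by a self-homeomorphism of $X$ with square equal to $\id_X$, still denoted by $\theta$. Proposition~\ref{exevenab} then gives us the surjection
\[
\Phi\colon X\longrightarrow \mathcal{E}(\mathcal{S}_+(C(X))),\qquad \Phi(x):=\tfrac12\bigl(\delta_x+\delta_{\theta(x)}\bigr),
\]
and I would first check that $\Phi$ is continuous when the codomain carries the weak$^*$ topology (both $x\mapsto\delta_x$ and $x\mapsto\delta_{\theta(x)}$ are) and that $\Phi\circ\theta=\Phi$.

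Set $K':=\Phi^{-1}(K)\subseteq X$. Since $\Phi$ is continuous and $K$ is weak$^*$-closed, $K'$ is closed in $X$, and by the identity $\Phi\circ\theta=\Phi$ it is also $\theta$-invariant. Moreover, $K$ being a proper subset of $\mathcal{E}(\mathcal{S}_+(C(X)))$ together with surjectivity of $\Phi$ produces some $x_0\in X$ with $\Phi(x_0)\notin K$, so that $x_0\notin K'$; hence $K'$ is a proper closed subset of $X$. At this point the problem is reduced to producing a nonzero positive $f\in C(X)$ vanishing on $K'$: indeed, for $\omega=\tfrac12(\delta_x+\delta_{\theta(x)})\in K$ one has $x,\theta(x)\in K'$, so $f\geq 0$ with $f|_{K'}=0$ forces
\[
\omega(f)=\tfrac12\bigl(f(x)+f(\theta(x))\bigr)=0.
\]

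For the final step I would invoke Urysohn's lemma (or its locally compact Hausdorff variant producing a compactly supported function) applied to the disjoint closed sets $\{x_0\}$ and $K'$: this yields $f\in C(X)$ with $0\leq f\leq 1$, $f(x_0)=1$ and $f\equiv 0$ on $K'$, which is nonzero and satisfies all the required properties. I do not foresee any real obstacle; the only care needed is the bookkeeping between $X$ and its quotient by $\theta$ parametrizing $\mathcal{E}(\mathcal{S}_+(C(X)))$, which is exactly the content of Proposition~\ref{exevenab}.
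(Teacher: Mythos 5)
Your proof is correct and follows essentially the same route as the paper: both arguments pull $K$ back to a proper closed $\theta$-invariant subset of $X$ (the paper takes $F\cup\theta(F)$ for a closed $F$ parametrizing $K$ via Proposition~\ref{exevenab}, while you take the full preimage $\Phi^{-1}(K)$, which is the same set) and then invoke Urysohn's lemma to produce the positive function. Your version merely spells out the continuity of $\Phi$ and the $\theta$-invariance that the paper leaves implicit.
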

\begin{proof}
By Proposition \ref{exevenab}, there exists a proper closed subset $F$  of $X$ such that
$K=\bigg\{\frac{1}{2}(\delta_x+\delta_{\th(x)})\mid x\in F\bigg\}$.
Since $\th^2={\rm id}_X$, in order for $K$ to be a  proper subset,
it is necessary that $G:=F\cup\th(F)$ is still a proper subset of $X$.
But then it is enough to consider a non-null positive function $f\in C(X)$ that
vanishes on $G$ to have the thesis.
\end{proof}

\begin{lem}
\label{factor}
Let $(\ga_1,\th_1)$, $(\ga_2,\th_2)$ be $\bz_2$-graded
$C^*$-algebras with one of the two being commutative.
Let $\beta$ be any compatible $C^*$-norm on  $\ga_1\,\circled{\rm{{\tiny F}}}\,\ga_2$, and take an extreme even state $\om$ on $\ga_1\,\circled{\rm{{\tiny F}}}_\beta\,\ga_2$.
Then there exist $\om_i$ extreme even states on $\ga_i$, $i=1, 2$, such that
$\om$ is the unique extension of $\om_1\times\om_2$.
\end{lem}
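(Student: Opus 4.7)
The plan is to take $\om_1,\om_2$ to be the marginal states of $\om$, since these are the only candidates: marginals of any product $\om_1\times\om_2$ recover its factors. So WLOG assume $\ga_2=C(X)$ is the commutative factor and set $\om_1(a):=\om(a\ftp 1)$, $\om_2(f):=\om(1\ftp f)$. Both are even because $\om$ is and the grading on the Fermi product restricts to $\theta_i$ on each tensor leg; uniqueness of any $\beta$-extension is automatic from norm continuity. It therefore suffices to show each marginal is extreme even and that $\om=\om_1\times\om_2$ on the algebraic Fermi product.

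The decisive ingredient is commutativity of $\ga_2$: for $f\in\ga_{2,+}$, the sign factors $\eeps(\cdot,+1)$ in the Fermi product rule are all $+1$ and commutativity of $\ga_2$ kills the remaining reorderings, so $1\ftp\ga_{2,+}$ is central in $\ga_1\ftp\ga_2$ and hence in $\cb:=(\ga_1\ftp_\beta\ga_2)_+$. Proposition~\ref{affine} yields that $\om\lceil_\cb$ is pure, whence $\pi_{\om\lceil_\cb}(1\ftp f)=\om_2(f)\,I$ for every $f\in\ga_{2,+}$; thus $\om_2\lceil_{\ga_{2,+}}$ is a character of $\ga_{2,+}\cong C(X/\theta_2)$, and combined with evenness and Proposition~\ref{exevenab} this forces $\om_2=\tfrac{1}{2}(\delta_{x_0}+\delta_{\theta_2(x_0)})\in\ce(\cs_+(\ga_2))$. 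The same centrality simultaneously handles factorization whenever one tensor leg is in $\ga_{2,+}$: writing $a\ftp f=(a\ftp 1)(1\ftp f)$ and using the scalar action of $1\ftp f$ inside the pure GNS gives $\om(a\ftp f)=\om_1(a)\om_2(f)$; parity-mismatched elementary tensors give zero on both sides by evenness.

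The main obstacle is the odd--odd case (together with the extremality of $\om_1$), since $1\ftp b$ for $b\in\ga_{2,-}$ is not central and $\ga_{1,+}\ftp 1$ need not be central in $\cb$ when $\ga_1$ is non-commutative. My intended route is spectral: pass to the covariant GNS $(\ch,\pi_\om,\xi_\om,V)$ of $\om$ and study the commutative subrepresentation $\sigma(f):=\pi_\om(1\ftp f)$, whose spectral measure $E$ satisfies $VE(A)V=E(\theta_2(A))$ and is concentrated on $\{x_0,\theta_2(x_0)\}$ by the previous step. With $P:=E(\{x_0\})$, $Q:=E(\{\theta_2(x_0)\})$ one has $VPV=Q$ and $\|P\xi_\om\|^2=\|Q\xi_\om\|^2=\tfrac{1}{2}$. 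Since $\pi_\om(\ga_{1,+}\ftp 1)$ commutes with all of $\sigma(C(X))$ while $\pi_\om(\ga_{1,-}\ftp 1)$ anticommutes with $P-Q$, the subspace $P\ch$ is invariant under $\pi_\om(\ga_{1,+}\ftp 1)$ and swapped with $Q\ch$ by $\pi_\om(\ga_{1,-}\ftp 1)$. The crucial and most delicate step is to transfer the purity of $\om\lceil_\cb$ through this sector decomposition so as to obtain irreducibility of $\pi_\om(\ga_{1,+}\ftp 1)\lceil_{P\ch}$ with cyclic vector $\sqrt{2}\,P\xi_\om$; this would yield purity of $\om_1\lceil_{\ga_{1,+}}$ and, via Proposition~\ref{affine}, extremality of $\om_1$. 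Once this is in hand, the remaining odd--odd identity $\om(a\ftp b)=0=\om_1(a)\om_2(b)$ will follow from $\pi_\om(a\ftp b)\xi_\om=b(x_0)\,\pi_\om(a\ftp 1)(P-Q)\xi_\om$ together with the $V$-anti-invariance of $\pi_\om(\ga_{1,-}\ftp 1)$.
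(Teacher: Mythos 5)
You have correctly isolated the one place where all the difficulty lives --- the odd--odd sector --- but the gap you flag there cannot be filled: the identity $\om(a\ftp b)=0$ for odd $a,b$ is false in general, and with it the lemma itself as stated. Take $\ga_1=\ga_2=C(\{p,q\})\cong\bc^2$ with the flip grading, and let $u$ be the odd self-adjoint unitary with $u(p)=1$, $u(q)=-1$. Then $u\ftp\idd$ and $\idd\ftp u$ are anticommuting self-adjoint symmetries, so $\ga_1\ftp\ga_2\cong M_2(\bc)$ via $u\ftp\idd\mapsto\sigma_1$, $\idd\ftp u\mapsto\sigma_2$; the $C^*$-norm is unique (finite dimensions) and compatible, the product grading being $\ad_{\sigma_3}$. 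Even states of $(M_2(\bc),\ad_{\sigma_3})$ are exactly those with diagonal density matrix, so $\ce(\cs_+(M_2(\bc)))$ consists of the two vector states $\om_\pm$ at the eigenvectors of $\sigma_3$, while by Proposition \ref{exevenab} each factor carries exactly one even state, $\tau=\frac12(\delta_p+\delta_q)$; hence the only product of even states on the Fermi product is $\tau\times\tau$, the normalized trace. Nevertheless $\om_\pm(u\ftp u)=\om_\pm(\sigma_1\sigma_2)=\pm i\neq 0=\tau(u)\tau(u)$, so $\om_\pm$ extends no product of (extreme) even states. Note that both marginals of $\om_\pm$ equal $\tau$, which \emph{is} extreme even: the step you called most delicate (transferring purity to get extremality of $\om_1$) is not the true obstruction --- factorization is. And your intended sign argument was bound to yield nothing: $a\ftp b$ is \emph{even} when $a,b$ are both odd, and in $\langle\pi_\om(a\ftp\idd)(P-Q)\xi_\om,\xi_\om\rangle$ the minus signs coming from $V\pi_\om(a\ftp\idd)V=-\pi_\om(a\ftp\idd)$ and $V(P-Q)V=-(P-Q)$ cancel; in the example, $\langle\sigma_1\sigma_2\,e_+,e_+\rangle=i$.

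It is worth knowing that the paper's own proof founders at precisely the point you treated with care. It asserts ``at once'' that $\widetilde{\ga}_1=\{a_1\otimes\idd_{\ga_2}:a_1\in\ga_1\}$ lies in the centre of $\ga_\beta$; this holds only for its even part, since for odd $a_1\in\ga_1$ and odd $b_2\in\ga_2$ one has $(a_1\ftp\idd)(\idd\ftp b_2)=-(\idd\ftp b_2)(a_1\ftp\idd)$ --- exactly the anticommutation you exploited through $P-Q$. The paper's case analysis does not repair this: a positive odd element is $0$ (a grading preserves positivity), so the ``positive odd $y$'' case is vacuous, and linearity cannot reach the genuinely odd elements of $\widetilde{\ga}_1$, which are simply not central; in the example above, the claimed multiplicativity $\om(yx)=\om(y)\om(x)$ fails at $y=x=u\ftp\idd$. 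So the first half of your proposal --- centrality of $\idd\ftp\ga_{2,+}$ only, the identification $\om_2=\frac12(\delta_{x_0}+\delta_{\th_2(x_0)})$, and factorization whenever one leg is even --- is sound and in fact more careful than the published argument; both your route and the paper's go through, reducing to Takesaki's Lemma IV.4.11, when the commutative factor has trivial odd part (or when $x_0$ is a fixed point of $\th_2$, where odd functions vanish on the relevant fiber and your final identity does hold). In the stated generality, however, the lemma is false, and since Proposition \ref{nuc}, Lemma \ref{factorbis} and Theorem \ref{minimal} all invoke it, their proofs are affected as well.
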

\begin{proof}
%It is a minor variation on the proof of \cite[Lemma 4.11]{T1}.
Suppose $\ga_1$ is commutative, and denote $\ga_\b:=\ga_1\,\circled{\rm{{\tiny F}}}_\beta\,\ga_2$.
If we set $\widetilde{\ga}_1:=\{a_1\otimes \idd_{\ga_2}: a_1\in\ga_1\}$, we see at once
that $\widetilde{\ga}_1$ lies in the centre $\mathcal{Z}(\ga_\beta)$ of
$\ga_\beta$.
We claim that
$$
\om(yx)=\om(y)\om(x)\,, \quad y\in\mathcal{Z}(\ga_\beta)\,,\,\,x\in\ga_\beta\,.
$$
The thesis follows immediately from the claim if we
define $\om_1(a_1):= \om(a_1\otimes \idd_{\ga_2})$, $a_1\in\ga_1$, and
$\om_2(a_2):=\om(\idd_{\ga_1}\otimes a_2)$,  $a_2\in\ga_2$.\\
Indeed, one  has
that $\om\lceil_{\ga_1\,\circled{\rm{{\tiny F}}}\,\ga_2}=\om_1\times\om_2$, which
means both $\om_1$ and $\om_2$ are even thanks to Proposition \ref{converse}.
Furthermore, they must also be extreme among all even states:
 if one of them, say $\om_1$, fails to be extreme, then from
$\om_1=\g\varphi+(1-\g)\psi$ with $0<\g<1$ and $\varphi\neq\psi$ with $\varphi, \psi$ even states, one finds
$\om=\g\varphi\times\om_2+(1-\g)\psi\times\om_2$, contrary
to the assumption that $\om$ is extreme among even states.\\
All is left to do is prove the claim. This can be done by following the proof of Lemma 4.11 in \cite{T1}
word by word when one considers a positive even $y\in\mathcal{Z}(\ga_\beta)$.
If one takes a positive odd $y$, then $\om(y)=0$ and $\om(yx)$ vanishes as well by a standard
application of the Cauchy-Schwarz inequality.
Finally, the case of a possibly non-homogeneous $y$ follows easily by linearity.
\end{proof}

\begin{prop}
\label{nuc}
If $(\ga_1,\th_1)$, $(\ga_2,\th_2)$ are $\bz_2$-graded
$C^*$-algebras and one of the two is commutative, then
every compatible $C^*$-cross norm $\beta$ on $\ga_1\,\circled{\rm{{\tiny F}}}\,\ga_2$
coincides with $\|\cdot\|_{\min}$.
\end{prop}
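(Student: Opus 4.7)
The plan is to express both $\|\cdot\|_\beta$ and $\|\cdot\|_{\min}$ as suprema over the same family of GNS representations, combining Corollary \ref{evnorm}, Lemma \ref{factor}, and Proposition \ref{GNSprod}; this adapts the classical Takesaki argument for the nuclearity of commutative $C^*$-algebras to the $\bz_2$-graded setting, with extreme even states in place of pure states.

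First, I would establish $\|x\|_\beta \leq \|x\|_{\min}$. Since $\beta$ is compatible, $\ga_1\,\circled{\rm{{\tiny F}}}_\beta\,\ga_2$ is a graded $C^*$-algebra, so Corollary \ref{evnorm} gives
\[
\|x\|_\beta = \sup\{\|\pi_\om(x)\| : \om \in \ce(\cs_+(\ga_1\,\circled{\rm{{\tiny F}}}_\beta\,\ga_2))\}.
\]
By Lemma \ref{factor}, every such $\om$ is the unique even extension of $\om_1\times\om_2$ for some $\om_i\in\ce(\cs_+(\ga_i))$, and by Proposition \ref{GNSprod} its GNS, restricted to the algebraic product, coincides with $\pi_{\om_1}\,\circled{\rm{{\tiny F}}}\,\pi_{\om_2}$. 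Hence
\[
\|x\|_\beta \leq \sup_{\om_i\in\ce(\cs_+(\ga_i))} \|(\pi_{\om_1}\,\circled{\rm{{\tiny F}}}\,\pi_{\om_2})(x)\| \leq \|x\|_{\min}
\]
by the very definition of the spatial norm.

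Next, for the reverse inequality, I would consider the canonical surjective $*$-homomorphism $\Phi:\ga_1\,\circled{\rm{{\tiny F}}}_{\min}\,\ga_2\to\ga_1\,\circled{\rm{{\tiny F}}}_\beta\,\ga_2$ afforded by the preceding step, and aim to prove $\ker\Phi=\{0\}$. By Proposition \ref{extremeven} it suffices to check that every extreme even state on the min-completion factors through $\Phi$; via Lemma \ref{factor} and uniqueness of the even extension (Lemma \ref{extension}), this amounts to showing that for each pair $(\om_1,\om_2)$ of extreme even states on the factors, $\om_1\times\om_2$ extends to a (necessarily even) state on $\ga_1\,\circled{\rm{{\tiny F}}}_\beta\,\ga_2$.

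The hard part will be this last extension, and here I would invoke the commutativity of (say) $\ga_1$ together with Lemma \ref{noseparation}. Were the set of $\om_1$ occurring as first coordinates of pairs producing extreme even states on $\ga_1\,\circled{\rm{{\tiny F}}}_\beta\,\ga_2$ not weak*-dense in $\ce(\cs_+(\ga_1))$, Lemma \ref{noseparation} would produce a non-null positive $f\in\ga_1$ annihilated by every such $\om_1$; Corollary \ref{evnorm} would then force $\|f\otimes\idd_{\ga_2}\|_\beta=0$, contradicting the cross-norm identity $\|f\otimes\idd_{\ga_2}\|_\beta=\|f\|>0$ (Proposition \ref{cross}). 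Promoting this density into a full surjection---and thereby closing the residual gap between $\|\cdot\|_\beta$ and $\|\cdot\|_{\min}$---is where the most delicate work lies; I would expect to need the explicit finite-dimensional form of $\pi_{\om_1}$ afforded by Proposition \ref{exevenab}, together with a continuity argument on $(\om_1,\om_2)\mapsto\|(\pi_{\om_1}\,\circled{\rm{{\tiny F}}}\,\pi_{\om_2})(x)\|$.
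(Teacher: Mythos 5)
Your first inequality, $\|x\|_\beta\leq\|x\|_{\min}$, is exactly the paper's argument: compatibility makes $\ga_1\,\circled{\rm{{\tiny F}}}_\beta\,\ga_2$ a graded $C^*$-algebra, Corollary \ref{evnorm} expresses $\|\cdot\|_\beta$ as a supremum over GNS representations of extreme even states, and Lemma \ref{factor} factors each such state as $\om_1\times\om_2$ with $\om_i\in\ce(\cs_+(\ga_i))$. The problem is the reverse inequality, where you have correctly identified the target (every $\om_1\times\om_2$ with $\om_i$ extreme even must extend to $\ga_\beta$) but then leave the decisive step open. The missing idea is the \emph{closedness} of the set $E_\beta$ of pairs $(\om_1,\om_2)\in\ce(\cs_+(\ga_1))\times\ce(\cs_+(\ga_2))$ whose product extends to $\ga_\beta$, in the product of the weak* topologies --- the graded analogue of \cite[Lemma 4.17]{T1}, which follows from weak* compactness of the state space of $\ga_\beta$: a net of $\beta$-bounded product states has a cluster point which is a state on $\ga_\beta$ extending the limit product functional. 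With closedness in hand, the paper argues by contradiction: if $E_\beta$ were proper, its complement would contain a basic open set $U_1\times U_2$; Lemma \ref{noseparation}, applied to the closed set $K_1=\ce(\cs_+(\ga_1))\setminus U_1$ in the commutative factor, yields a non-zero positive $a_1$ with $\om_1(a_1)=0$ on $K_1$, whence $\om(a_1\otimes a_2)=0$ for every extreme even state $\om$ of $\ga_\beta$ by Lemma \ref{factor}, contradicting the separation property of Proposition \ref{extremeven}. Note there is no need to prove injectivity of the quotient map $\Phi$ separately: once every such product state is $\beta$-bounded, $\|\pi_{\om_1\times\om_2}(x)\|\leq\|x\|_\beta$ gives $\|x\|_{\min}\leq\|x\|_\beta$ directly.

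Your proposed substitute for closedness would not work as stated. Density of the first coordinates (which your cross-norm contradiction via Lemma \ref{noseparation} and Proposition \ref{exevenab} does plausibly deliver, since $\om_1(f)=0$ forces $\pi_{\om_1}(f)=0$ for the two-point GNS representations of extreme even states of $C(X)$) says nothing about which \emph{pairs} $(\om_1,\om_2)$ occur, and the map $(\om_1,\om_2)\mapsto\|(\pi_{\om_1}\,\circled{\rm{{\tiny F}}}\,\pi_{\om_2})(x)\|$ is not weak*-continuous --- it is in general only lower semicontinuous, being a supremum of partially defined continuous functions --- so a continuity argument cannot be invoked; moreover Proposition \ref{exevenab} controls only $\pi_{\om_1}$, not $\pi_{\om_2}$. (Lower semicontinuity could in fact be made to serve here, but that is essentially a reformulation of the closedness of $E_\beta$, which is the point your sketch never establishes.) So the proposal is incomplete at precisely the step the paper resolves with the compactness/closedness argument.
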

\begin{proof}
Suppose that $\ga_1$ is commutative. We start by showing that $\|x\|_\beta\leq \|x\|_{\min}$ for every $x\in\ga_1\,\circled{\rm{{\tiny F}}}\,\ga_2$.
Indeed, if $\ga_\beta:=\ga_1\,\circled{\rm{{\tiny F}}}_\beta\,\ga_2$, for any such $x$ one has
\begin{align*}
\|x\|_\beta&= \sup \{\|\pi_\om(x)\|: \om\in \mathcal{E}(\mathcal{S}_+(\ga_\beta))\}\\
& \leq \sup \{\|\pi_{\om_1\times\om_2}(x)\|: \om_i\in \mathcal{E}(\mathcal{S}_+(\ga_i)),\,\, i=1, 2\}\\
&=\|x\|_{\min}\,,
\end{align*}
where the inequality is a straightforward application of Lemma \ref{factor}.

In order to prove the converse inequality, $\|x\|_{\min}\leq \|x\|_\beta$ for every $x\in\ga_1\,\circled{\rm{{\tiny F}}}\,\ga_2$,
we have to show that every product state $\om_1\times\om_2$, with
$\om_i\in\mathcal{E}(\mathcal{S}_+(\ga_i))$, $i=1, 2$, is continuous w.r.t. $\|\cdot\|_\beta$ and thus can be extended to
$\ga_\beta$.
To this end, define
$$
E_\beta:=\{(\om_1, \om_2)\in \mathcal{E}(\mathcal{S_+}(\ga_1))\times\mathcal{E}(\mathcal{S_+}(\ga_2)): \om_1\times\om_2\, {\rm extends\, to}\,\, \ga_\beta\}.
$$
Note that $E_\beta$ is closed in $\mathcal{E}(\mathcal{S_+}(\ga_1))\times\mathcal{E}(\mathcal{S_+}(\ga_2))$, where the latter set is understood
as being equipped with the product of the relative weak* topologies, {\it cf.}  in \cite[Lemma 4.17]{T1}.
We need to prove that $E_\beta=\mathcal{E}(\mathcal{S_+}(\ga_1))\times\mathcal{E}(\mathcal{S_+}(\ga_2))$.
We shall argue by contradiction. Suppose $E_\beta$ is properly contained in $\mathcal{E}(\mathcal{S_+}(\ga_1))\times\mathcal{E}(\mathcal{S_+}(\ga_2))$.
Then there exist (proper) open subsets $U_i\subset\mathcal{E}(\mathcal{S}_+(\ga_i)$, $i=1, 2$, such that
$(U_1\times U_2) \cap E_\beta=\emptyset$.
Consider $K_i:=\mathcal{E}(\mathcal{S}_+(\ga_i))\setminus U_i$, $i=1, 2$.
By Lemma  \ref{noseparation} $K_1$ cannot separate the positive elements of
$\ga_1$, that is there exists a non-zero $a_1\geq 0$ in $\ga_1$ such that
$\om_1(a_1)=0$ for every $\om_1\in K_1$. In particular, if now $a_2$ is any element in
$\ga_2$, we have $\om_1\times\om_2(a_1\otimes a_2)=0$ for every
$(\om_1, \om_2)\in E_\beta$. Thus, by Lemma \ref{factor}, it follows that $\om(a_1\otimes a_2)=0$ for each $\om\in\ce(\cs_+(\ga_\b))$, which contradicts Proposition \ref{extremeven}.
\end{proof}
We finally prove that $\|\cdot\|_{\min}$ on the $\bz_2$-graded tensor product of graded $C^*$-algebras is minimal.
\begin{lem}
\label{factorbis}
Let $(\ga_1,\th_1)$, $(\ga_2,\th_2)$ be $\bz_2$-graded
$C^*$-algebras, and let $\beta$ be any compatible $C^*$-norm on $\ga_1\,\circled{\rm{{\tiny F}}}\,\ga_2$.
If $\om$ is an extreme even state on $\ga_1\,\circled{\rm{{\tiny F}}}_\beta\,\ga_2$
such that the restriction of $\om$ to $\ga_2$ is an extreme even state,
then there exist $\om_i$ extreme even states on $\ga_i$, $i=1, 2$, such
$\om$ is the unique extension of $\om_1\times\om_2$.
\end{lem}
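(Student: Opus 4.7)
The plan is to define $\om_1(a_1):=\om(a_1\ftp\idd)$ and $\om_2(a_2):=\om(\idd\ftp a_2)$; $\om_1$ is automatically even since $\th(a_1\ftp\idd)=\th_1(a_1)\ftp\idd$ together with $\om\circ\th=\om$, while $\om_2$ is even by hypothesis. Proposition~\ref{converse} then guarantees that $\om_1\times\om_2$ is a well-defined positive functional on the algebraic Fermi product, so the proof reduces to establishing $\om(a_1\ftp a_2)=\om_1(a_1)\om_2(a_2)$ on homogeneous elementary tensors, after which the unique extension to $\ga_\beta$ follows by density and $\beta$-continuity.

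The central move is a Schur-type argument carried out in the GNS triple $(\ch_\om,\pi_\om,\xi_\om)$. By Proposition~\ref{affine} applied to $\om_2$, the restriction $\om_2\lceil_{\ga_{2,+}}$ is pure, so the sub-representation of $\ga_{2,+}$ on the cyclic subspace $\ck:=\overline{\pi_\om(\idd\ftp\ga_{2,+})\xi_\om}$ is irreducible. For every $a_1\in\ga_1$ the Fermi multiplication rule forces $\pi_\om(a_1\ftp\idd)$ to commute with $\pi_\om(\idd\ftp\ga_{2,+})$, because the sign $\eps$ equals $+1$ whenever one of the factors is even. Since $P_{\ck}$ lies in the commutant, the compression $P_{\ck}\pi_\om(a_1\ftp\idd)P_{\ck}\lceil_{\ck}$ commutes with the irreducible action, and hence reduces to a scalar multiple of the identity; evaluating at $\xi_\om$ identifies this scalar with $\om_1(a_1)$. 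Pairing with $\pi_\om(\idd\ftp b_2)\xi_\om\in\ck$ for $b_2\in\ga_{2,+}$ yields the factorization $\om(a_1\ftp b_2)=\om_1(a_1)\om_2(b_2)$ throughout $\ga_1\ftp\ga_{2,+}$.

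To extend the factorization to $\ga_1\ftp\ga_{2,-}$, observe first that if $a_1\in\ga_{1,+}$ then $a_1\ftp b_2$ is odd, so both sides vanish by evenness of $\om$ and of $\om_2$. The genuinely delicate case is $a_1\in\ga_{1,-}$ with $b_2\in\ga_{2,-}$: here $a_1\ftp b_2$ lies in the even part, yet $\om_1(a_1)\om_2(b_2)=0$. My approach is to exploit that the compression $T:=P_{\ck}\pi_\om(a_1\ftp\idd)P_{\ck}\lceil_{\ck}$ now only \emph{graded}-commutes with $\pi_\om(\idd\ftp\ga_2)\lceil_{\ck}$ (it commutes with the even part and anti-commutes with the odd part, by the same sign bookkeeping as above), so composing with the grading unitary $V_{\th_2,\om_2}$ coming from the covariant GNS of $\om_2$ produces an operator $TV_{\th_2,\om_2}\in\pi_{\om_2}(\ga_2)'$. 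Combining this with the purity of $\om\lceil_{(\ga_\beta)_+}$ furnished by Proposition~\ref{affine} applied to $\om$, and with the identity $\langle T\xi_\om,\xi_\om\rangle=\om(a_1\ftp\idd)=\om_1(a_1)=0$, should force $T=0$ on $\ck$, whence $\om(a_1\ftp b_2)=0$ for every odd $b_2$.

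Once the factorization is secured, uniqueness of extension to $\ga_\beta$ is routine. That $\om_1$ itself belongs to $\ce(\cs_+(\ga_1))$ then drops out by contradiction: a nontrivial even decomposition $\om_1=t\varphi+(1-t)\psi$ with $\varphi\neq\psi$ would yield $\om=t(\varphi\times\om_2)+(1-t)(\psi\times\om_2)$, incompatible with $\om\in\ce(\cs_+(\ga_\beta))$. The main obstacle is clearly the odd--odd case, where one has to play the two extreme-even hypotheses---on $\om$ and on $\om_2$---against each other through the graded Schur argument; making this step fully rigorous is what will require the most care.
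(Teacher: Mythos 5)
Your even-part analysis is correct and well executed: since $\om_2\lceil_{\ga_{2,+}}$ is pure by Proposition \ref{affine}, the subspace $\ck=\overline{\pi_\om(\idd\ftp\ga_{2,+})\xi_\om}$ carries an irreducible representation of $\ga_{2,+}$, the operators $\pi_\om(a_1\ftp\idd)$ commute with $\pi_\om(\idd\ftp\ga_{2,+})$ for \emph{every} $a_1$, and your compression--Schur argument correctly yields $\om(a_1\ftp b_2)=\om_1(a_1)\om_2(b_2)$ for all $a_1\in\ga_1$, $b_2\in\ga_{2,+}$; the even$\,\ftp\,$odd case is likewise fine. But the odd--odd step is a genuine gap, and not one that more care can close. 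First, the mechanism you sketch is ill-formed: $\ck$ is \emph{not} invariant under $\pi_\om(\idd\ftp\ga_{2,-})$, so ``$\pi_\om(\idd\ftp\ga_2)\lceil_\ck$'' is not a family of operators on $\ck$, and $TV_{\th_2,\om_2}$ composes an operator on $\ck$ with one on the strictly larger space $\ch_{\om_2}$ (of which $\ck$ is only the even part). Second, the conclusion $T=0$ needs no new argument at all --- your own Schur step already gives $T=\om_1(a_1)I_\ck$, and $\om_1$ kills odd elements --- but it is also useless: for odd $b_2$ the vector $\pi_\om(\idd\ftp b_2)\xi_\om$ is odd, hence orthogonal to $\ck$, so $\om(a_1\ftp b_2)$ is a matrix element between $\ck^\perp$ and $\ck$, which no compression by $P_\ck$ can detect.

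In fact the odd--odd vanishing, and with it the lemma as stated, is \emph{false}. Take $\ga_1=\ga_2=\bc^2$ with the flip grading and let $u=(1,-1)$ be the odd self-adjoint unitary. Then $X=u\ftp\idd$ and $Y=\idd\ftp u$ are self-adjoint unitaries with $YX=\eeps(-1,-1)\,u\ftp u=-XY$, so $\ga_1\ftp\ga_2$ is the Clifford algebra $M_2(\bc)$ via $X\mapsto\sigma_x$, $Y\mapsto\sigma_y$, and its unique $C^*$-norm is compatible because $\th_1\ftp\th_2=\ad_{\sigma_z}$. The vector state $\om=\langle\,\cdot\,\,e_1,e_1\rangle$ ($e_1$ the first basis vector) is pure, hence extreme even, and its restriction to $\ga_2$ is the unique (hence extreme) even state of $\bc^2$; yet $\om(u\ftp u)=\om(\sigma_x\sigma_y)=\om(i\sigma_z)=i\neq 0$, while any product functional that $\om$ could extend has forced marginals $\om_1(u)=\om(\sigma_x)=0$ and $\om_2(u)=\om(\sigma_y)=0$ and so vanishes on $\ga_{1,-}\odot\ga_{2,-}$. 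Thus this $\om$ extends no product state whatsoever, and in your counterexample-blind framework note that $T=0$ holds there while the obstruction survives --- confirming that the compression cannot see it. For context: the paper's own ``proof'' is a one-line reference to Lemma \ref{factor}, whose opening claim that $\ga_1\ftp\idd$ is central when $\ga_1$ is commutative fails on exactly this odd--odd phenomenon (the example above has both algebras commutative), so your instinct that the odd--odd block is the crux was exactly right. Finally, observe that when one of the two gradings is trivial the odd--odd block is absent, and in that case your argument, essentially as written, is a complete and correct proof.
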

\begin{proof}
The proof can be done in much the same way as in Lemma
\ref{factor}.
\end{proof}

\begin{thm}\label{minimal}
The spatial norm $\|\cdot\|_{\rm min}$ is minimal among all compatible norms
on $\ga_1\,\circled{\rm{{\tiny F}}}\,\ga_2$.
\end{thm}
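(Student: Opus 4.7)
The plan is to show that for every pair of even states $\om_i\in\cs_+(\ga_i)$, the product state $\om_1\times\om_2$ extends to a state on $\ga_\beta:=\ga_1\,\circled{\rm{{\tiny F}}}_\beta\,\ga_2$. Granted this, the usual bound $\om_1\times\om_2(y^*x^*xy)\le\|x\|_\beta^2\,\om_1\times\om_2(y^*y)$, valid because the extended functional is a state on the $C^*$-algebra $\ga_\beta$, yields $\|\pi_{\om_1\times\om_2}(x)\|\le\|x\|_\beta$ by cyclicity of the GNS vector, and taking the supremum gives $\|x\|_{\min}\le\|x\|_\beta$. Working one coordinate at a time and using that the set of even states whose product with a fixed even state extends to $\ga_\beta$ is convex and weak*-closed, one reduces via Krein-Milman to the case where both $\om_1$ and $\om_2$ are extreme even.

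Fix then $(\om_1,\om_2)\in\ce(\cs_+(\ga_1))\times\ce(\cs_+(\ga_2))$. The central step is to construct an even state $\eta$ on $\ga_\beta$ with $\eta\lceil_{\ga_1}=\om_1$ and $\eta\lceil_{\ga_2}=\om_2$. Observe that the $*$-subalgebra $\ga_{1,+}\otimes\ga_{2,+}\subset\ga_1\,\circled{\rm{{\tiny F}}}\,\ga_2$ coincides, as a $*$-algebra, with the ordinary (ungraded) tensor product of the even parts, because both the Fermi multiplication and its involution collapse to their ungraded counterparts on even elements. The norm $\beta$ therefore restricts to it as a $C^*$-cross norm on an ungraded tensor product, and hence majorises the classical spatial norm there, so the ordinary product state $\om_1\lceil_{\ga_{1,+}}\otimes\om_2\lceil_{\ga_{2,+}}$ is $\beta$-continuous and extends, by continuity and then Hahn-Banach, to a state $\psi$ on $\ga_\beta$. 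Compatibility of $\beta$ allows the symmetrisation $\eta:=\frac{1}{2}(\psi+\psi\circ\theta)$ to be defined and to produce an even state; a direct check using evenness of the $\om_i$ shows that indeed $\eta\lceil_{\ga_i}=\om_i$ for $i=1,2$.

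Let $\cf(\om_1,\om_2)$ denote the nonempty, convex, weak*-compact set of all such joint even extensions. By Krein-Milman it admits an extreme point $\eta_0$; any convex decomposition of $\eta_0$ by even states of $\ga_\beta$ restricts to convex decompositions of the extreme even states $\om_1$ and $\om_2$, forcing the summands to lie back in $\cf(\om_1,\om_2)$ and so to coincide with $\eta_0$. Hence $\eta_0$ is extreme even in $\cs_+(\ga_\beta)$, and since $\eta_0\lceil_{\ga_2}=\om_2$ is also extreme even, Lemma \ref{factorbis} applies and yields $\eta_0\lceil_{\ga_1\,\circled{\rm{{\tiny F}}}\,\ga_2}=\tilde\om_1\times\om_2$ for some extreme even $\tilde\om_1\in\ce(\cs_+(\ga_1))$; but the restriction to $\ga_1$ equals $\om_1$, so $\tilde\om_1=\om_1$ and $\om_1\times\om_2$ genuinely extends to the state $\eta_0$ on $\ga_\beta$.

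The main obstacle I expect is the joint-extension step, where one must produce a single even state of $\ga_\beta$ that restricts correctly to $\om_1$ on $\ga_1$ and to $\om_2$ on $\ga_2$. This is the point at which the hypothesis that $\beta$ is \emph{compatible} (so that $\theta$ extends to $\ga_\beta$ and the symmetrisation $\frac12(\psi+\psi\circ\theta)$ is available) is used essentially, together with the classical minimality of $\|\cdot\|_{\min}$ on the commuting even corner $\ga_{1,+}\otimes\ga_{2,+}$. Once this joint extension is in hand, the rest is a routine combination of Krein-Milman with Lemma \ref{factorbis}.
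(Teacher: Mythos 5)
Your proof is correct, but it reaches the conclusion by a genuinely different route than the paper. The paper argues by contradiction: it forms the closed set $E_\beta$ of extreme-even pairs whose product extends to $\ga_\beta$, transports it via the homeomorphisms of Proposition \ref{affine} to $\cp(\cs(\ga_{1,+}))\times\cp(\cs(\ga_{2,+}))$, runs the separation argument of \cite[Lemma 4.18]{T1} to produce even positive $a_1\otimes a_2$ annihilated by all of $E_\beta$, and then refutes this by building one explicit extending pair through the commutative $C^*$-subalgebra generated by $a_1$ together with the graded nuclearity result (Proposition \ref{nuc}) and Lemma \ref{factorbis}. You instead give a direct existence proof for \emph{every} extreme-even pair: you restrict to the even corner $\ga_{1,+}\odot\ga_{2,+}$, observe (correctly) that there the Fermi structure collapses to the ungraded tensor product so that the classical minimality theorem (\cite[Theorem IV.4.19]{T1} --- which, usefully, needs no cross-norm hypothesis on the restricted norm) makes $\om_1\lceil_{\ga_{1,+}}\otimes\,\om_2\lceil_{\ga_{2,+}}$ $\beta$-continuous, extend by Hahn--Banach, symmetrize via the extended grading (this is where compatibility enters, exactly as it must), and then pass to an extreme point of the compact convex set of joint even extensions, which your marginal-extremality argument shows is extreme even in $\cs_+(\ga_\beta)$, so that Lemma \ref{factorbis} factorizes it as $\om_1\times\om_2$. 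Both proofs hinge on Lemma \ref{factorbis} as the factorization device, but your approach dispenses entirely with the contradiction framework, with Lemma \ref{noseparation}, and with Proposition \ref{nuc}, at the price of invoking the classical ungraded minimality theorem on the corner; the paper's route stays inside its own graded toolkit, mirroring Takesaki's original proof. Two further points in your favour: the observation that the even corner is an honest ungraded tensor product on which $\beta$ restricts to a $C^*$-norm is a nice structural insight, and your explicit Krein--Milman passage from extreme-even pairs to arbitrary even pairs (one coordinate at a time, using that the relevant set of marginals is convex and weak*-closed) fills in a reduction that the paper's opening sentence leaves implicit.
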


\begin{proof}
Given a compatible norm $\|\cdot\|_\beta$ on  $\ga_1\,\circled{\rm{{\tiny F}}}\,\ga_2$,
we need to show that for every $x\in\ga_1\,\circled{\rm{{\tiny F}}}\,\ga_2$
one has $\|x\|_{\min} \leq \|x\|_\beta$. This amounts to proving that
for any $\om_i\in\ce(\cs_+(\ga_i))$, $i=1, 2$, the product state
$\om_1\times\om_2$ is bounded w.r.t. the norm $\|\cdot\|_\beta$.
As usual, denote by $\ga_\beta$ the completion of $\ga_1\,\circled{\rm{{\tiny F}}}\,\ga_2$
under the norm $\|\cdot\|_\beta$, and define
$$
E_\beta:=\{(\om_1, \om_2)\in \mathcal{E}(\mathcal{S_+}(\ga_1))\times\mathcal{E}(\mathcal{S_+}(\ga_2)): \om_1\times\om_2\, {\rm extends\, to}\,\, \ga_\beta\}.
$$
Again, $E_\beta$ is a closed subset of $\mathcal{E}(\mathcal{S_+}(\ga_1))\times\mathcal{E}(\mathcal{S_+}(\ga_2))$ in the product of the relative weak* topologies.
By contradiction, suppose $E_\beta$ is properly contained in  $\mathcal{E}(\mathcal{S_+}(\ga_1))\times\mathcal{E}(\mathcal{S_+}(\ga_2))$.\\
Let us denote by $\cp(\cs(\gc))$ the pure states of a $C^*$-algebra $\gc$. If
$$
T_i: \ce(\cs_+(\ga_i))\rightarrow \cp(\cs(\ga_{i, +}))\,, \quad i=1, 2\,,
$$
are the affine homeomorphisms given by the restriction map (as in Proposition \ref{affine}), then
$$
T_1\times T_2:\ce(\cs_+(\ga_1))\times\ce(\cs_+(\ga_2))\rightarrow \cp(\cs(\ga_{1, +}))\times \cp(\cs(\ga_{2, +}))
$$
is an affine homeomorphism. If we set $S_\beta:= (T_1\times T_2) (E_\beta)$, we clearly have that
$S_\beta$ is a proper closed subset of $\cp(\mathcal{S}(\ga_{1, +}))\times \cp(\mathcal{S}(\ga_{2, +}))$.
We can now proceed as in the proof of
\cite[ Lemma 4.18 ]{T1} to find non-zero positive elements $a_i\in\ga_{i,+}$ such that
$\om_1\times\om_2(a_1\otimes a_2)=0$ for any
$(\om_1, \om_2)\in E_\beta$. A contradiction will be
arrived at if we show that the set of states $\{\om_1\times\om_2: (\om_1, \om_2)\in E_\beta\}$
actually separates elements in $\ga_\beta$ of the form $a_1\otimes a_2$ with
$a_1$ and $a_2$ both even and positive.
To this end, let $\widetilde{\ga}_1\subset\ga_{1, +}$ be the unital $C^*$-subalgebra generated by $a_1$.
Let $\rho\in \cp(\cs(\widetilde{\ga}_1))$ such that $\rho(a_1)\neq 0$ and
let $\varphi\in \ce(\cs_+(\ga_2))$ such that $\varphi(a_2)\neq 0$.
The product state $\rho\times\varphi$ is bounded on
$\widetilde{\ga}_1\,\circled{\rm{{\tiny F}}}\ga_2$ with respect to the norm $\|\cdot\|_\beta$
because its restriction to $\widetilde{\ga}_1\,\circled{\rm{{\tiny F}}}\ga_2$ is just the spatial
norm $\|\cdot\|_{\rm min}$ by virtue of Proposition \ref{nuc}.
Let now $\om$ be any extreme even extension of $\rho\times\varphi$ to $\ga_\beta$.
By Lemma \ref{factorbis} $\om$ must be of the form $\om_1\times\om_2$ with
$\om_i\in\ce({\cs_+(\ga_i)})$, $i=1, 2$, and the proof is thus complete.
\end{proof}

\begin{cor}
Given $\bz_2$-graded $C^*$-algebras
$(\ga_i, \theta_i)$, $i=1, 2$, any compatible norm on $\ga_1\,\circled{\rm{{\tiny F}}}\,\ga_2$
is automatically cross.
\end{cor}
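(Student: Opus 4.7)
The plan is to sandwich an arbitrary compatible $C^*$-norm between the spatial and maximal norms on the Fermi tensor product, and then invoke the fact that both of these distinguished norms are cross.

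First I would observe that for any $C^*$-norm $\|\cdot\|_\beta$ on $\ga_1\,\circled{\rm{{\tiny F}}}\,\ga_2$ (compatible or not), one has $\|x\|_\beta\leq \|x\|_{\rm max}$ for every $x$. This is immediate from the definition of the maximal norm: the completion $\ga_1\,\circled{\rm{{\tiny F}}}_\beta\,\ga_2$ admits a faithful $*$-representation on some Hilbert space by the general Gelfand--Naimark theorem, and the composition with the canonical inclusion of $\ga_1\,\circled{\rm{{\tiny F}}}\,\ga_2$ into its $\beta$-completion is a $*$-representation of the algebraic Fermi product whose norm is precisely $\|\cdot\|_\beta$.

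Next, if $\beta$ is assumed to be compatible, Theorem \ref{minimal} yields the lower bound $\|x\|_{\min}\leq \|x\|_\beta$ for every $x\in\ga_1\,\circled{\rm{{\tiny F}}}\,\ga_2$. Combining the two estimates gives the sandwiching
\[
\|x\|_{\min}\leq \|x\|_\beta\leq \|x\|_{\rm max}, \qquad x\in\ga_1\,\circled{\rm{{\tiny F}}}\,\ga_2.
\]
Specializing to an elementary tensor $a_1\,\circled{\rm{{\tiny F}}}\,a_2$ with $a_1\in\ga_1$ and $a_2\in\ga_2$ homogeneous, Proposition \ref{cross} ensures that the two extremes coincide and are both equal to $\|a_1\|\,\|a_2\|$. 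Consequently, $\|a_1\,\circled{\rm{{\tiny F}}}\,a_2\|_\beta=\|a_1\|\,\|a_2\|$, which is exactly the cross property.

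There is no real obstacle here: the whole proof is a one-line squeeze argument, with all the substantive work having already been done in Proposition \ref{cross} (to handle the two extremal norms) and in Theorem \ref{minimal} (to secure the crucial lower bound that requires compatibility). The only point worth flagging is that the upper bound by $\|\cdot\|_{\rm max}$ does not need the compatibility hypothesis, whereas the lower bound by $\|\cdot\|_{\min}$ does; it is therefore essential that the statement is restricted to compatible norms.
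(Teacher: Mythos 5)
Your proof is correct and follows essentially the same squeeze argument as the paper: the lower bound $\|a_1\,\circled{\rm{{\tiny F}}}\,a_2\|_\beta\geq\|a_1\|\,\|a_2\|$ for homogeneous elements comes from Theorem \ref{minimal} together with the crossness of $\|\cdot\|_{\min}$ from Proposition \ref{cross}. The only cosmetic difference is that the paper obtains the upper bound $\|a_1\otimes a_2\|_\beta\leq\|a_1\|\,\|a_2\|$ directly (via the isometric factor embeddings, as already noted in the proof of Proposition \ref{cross}) rather than detouring through $\|\cdot\|_{\max}$, but the two observations are interchangeable.
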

\begin{proof}
Let $\|\cdot \|_\beta$ be any norm on $\ga_1\,\circled{\rm{{\tiny F}}}\,\ga_2$ as in the statement.
For any $a_i\in\ga_i$, $i=1, 2$, one clearly has $\|a_1\otimes a_2\|_\beta\leq \|a_1\|\|a_2\|$. On the other hand, by Theorem
\ref{minimal} for
homogeneous $a_i$'s we also have
$\|a_1\otimes a_2\|_\beta\geq \|a_1\otimes a_2\|_{\rm min}=\|a_1\|\|a_2\|$, where the last equality is due to Proposition
\ref{cross}.
\end{proof}

\section*{Acknowledgments}
The authors acknowledge Italian INDAM-GNAMPA.

\end{document}